\def\cal{\mathcal}
\newcommand{\field}[1]{\mathbb{#1}}
\newcommand{\C}{\field{C}}
\newcommand{\N}{\field{N}}
\newcommand{\R}{\field{R}}
\newcommand{\Sing}{\mathrm{Sing }}
\newtheorem{definition}{Definition}[section]
\newtheorem{lemma}[definition]{Lemma}
\newtheorem{theorem}[definition]{Theorem}
\newtheorem{corollary}[definition]{Corollary}
\newtheorem{proposition}[definition]{Proposition}
\font\tenmsy=msbm10
\def\Bbb#1{\hbox{\tenmsy#1}} 
\title [Symmetry defect of $n$-dimensional complete intersections]{Symmetry defect of $n$- dimensional complete intersections in $\mathbb C^{2n-1}$} \makeatletter
\thanks{}
\date {\today}
\author{L.R.G. Dias  \& Z. Jelonek} 
\address[L.R.G. Dias] {Faculdade de Matem\'atica, Universidade Federal de Uberl\^andia, Av. Jo\~ao Naves de \'Avila 2121, 1F-153 - CEP: 38408-100, Uberl\^andia, Brasil.}
\email{lrgdias@ufu.br}
\address[Z. Jelonek]{Instytut Matematyczny\\
	Polska Akademia Nauk\\
	\'Sniadeckich 8, 00-956 Warszawa, Poland.}
\email{najelone@cyf-kr.edu.pl}
\keywords{symmetric defect, global theory of singularities, Wigner caustic.}
\subjclass{MSC Classification: 58 K 30, 14 D 06}
\begin{document}

\maketitle

\begin{abstract} Let $X, Y \subset \C^{2n-1}$ be  $n$-dimensional strong complete intersections in a general position. In this note, we consider the set of midpoints of chords connecting a point $x \in X$ to a point $y \in Y$. This set is defined as the image of the map 
$\Phi(x,y)=\frac{x+y}{2}.$  Under geometric conditions on  $X$ and $Y$, we prove that the symmetry defect of $X$ and $Y$, which is the bifurcation set $B(X,Y)$ of the mapping $\Phi$, is an algebraic variety, characterized by a topological invariant. We introduce a hypersurface that  approximates the set $B(X,Y)$ and we present an estimate for its degree. Moreover, for any two $n$-dimensional strong complete intersections $X,Y\subset \C^{2n-1}$ (including the case $X=Y$) we introduce a generic symmetry defect set $\tilde{B}(X,Y)$ of $X$ and $Y$, which is defined up to homeomorphism.
\end{abstract}

\section{Introduction}

Over the last two decades numerous methods have been developed to
study affine geometry of surfaces and curves especially their
affinely invariant symmetry characteristics. The symmetry sets
\cite{Bruce, Damon, Damon1} and the center symmetry sets were
investigated extensively in \cite{GH, GJ, Jan}. Several constructions of
the set equivalent to the point of central symmetry for perturbed
centrally symmetric ovals were presented in the literature and
resulted in the kind of symmetry defect called center symmetry set.
The center symmetry set directly appears in the construction of the
so-called Wigner caustic. This caustic is obtained by the stationary
phase method applied to the semiclassical Wigner function which
completely describes a quantum state in the symplectic phase space
\cite{berry}. It is {built} of points where the central symmetry, i.e.
the number of intervals ending in the surface and passing centrally
through that point, changes. We call this set a symmetry defect or
bifurcation set. In \cite{JJR}   this construction was generalized for algebraic
varieties $Z^n\subset \C ^{2n}$.

Let us recall that a pair of points $x, y$ of a plane curve $Z$ is called a {\it parallel pair} if their tangent lines  are parallel. Given $\lambda \in [0, 1]$, the affine $\lambda$-equidistant is the set of points $\lambda x +(1-\lambda)y$ such that $x, y$ is a parallel pair of $Z$. In the special case $\lambda=1/2$, the affine $1/2$-equidistant is   mentioned above the Wigner Caustic.   
 
Related to the affine $1/2$-equidistant, it was proved in \cite{JJR} the following general result: let $Z\subset \C^{2n}$ be a  $n$-dimensional algebraic variety  in a general position. For $a\in \C^{2n}$, let $\mu(a)$ be the number of chords of $Z$ whose midpoint is $a$. Then there exists an algebraic set $B(Z)\subset \C^{2n}$, called {\it symmetric defect} of $Z$, such that $\deg B(Z) <(\deg Z)^2(1+2n(\deg Z-1)) $ and  $\mu(a)$ is constant on $\C^{2n}\setminus  B(Z)$. 

This note  is motivated by the recent results of  \cite{DFJ} and \cite{JJR}. We address the problem of how to introduce a {\it symmetric defect set} associated to $n$-dimensional strong complete intersections $X,Y\subset \C^{2n-1}$. We also investigate algebraic and topological properties of this set.

In order to state our theorems properly, we introduce the following definitions: 

\begin{definition} We say that an $n$-dimensional affine variety $Z\subset \C^m$ is a strong complete intersection,  if the following conditions are satisfied:

1) $Z=\{z\in\C^{m} \mid h(z)=0 \mbox{ and } \mathrm{rank} \, d h (z)=m-n\}$, for some polynomial mapping  $h=(h_1,\ldots,h_{m-n})\colon\C^{m}\to\C^{m-n}$. 

2) $\dim \{ {(h_1)}_*=0,..., {(h_{m-n})}_*=0\}=m-n$, where ${h_i}_*$ denotes the highest homogeneous component of $h_i.$  

\noindent If $\deg h_i=a_i$, then we say that $Z$ has multi-degree $(a_1,...,a_{m-n}).$
\end{definition}

Now, let $X, Y \subset \C^{2n-1}$ be  $n$-dimensional strong complete intersections, we say: 
\[X=\{z\in\C^{2n-1} \mid f(z)=0 \mbox{ and } \mathrm{rank} \, d f (z)=n-1\},\]
\[Y=\{z\in\C^{2n-1} \mid g(z)=0 \mbox{ and } \mathrm{rank} \, d g (z)=n-1\},\] 
with $f=(f_1,\ldots,f_{n-1})\colon\C^{2n-1}\to\C^{n-1}$ and $g=(g_1,\ldots,g_{n-1})\colon\C^{2n-1}\to\C^{n-1}$ polynomial mapping. 

\begin{definition}\label{d:general} We say that $X$ and $Y$ are in general positions  if the following holds
	 \[\dim \left( \left(\bigcap_{1}^{n-1}\{(f_i)_*=0\}\right) \bigcap \left(\bigcap_{1}^{n-1}\{(g_i)_*=0\}\right)\right)=1.\]
\end{definition}

We consider
 \begin{equation}\label{eq:Phi1}
 \Phi: X\times Y \to \C^{2n-1} \mbox{ given by } \Phi(x,y)= \frac{1}{2}( x+y),
 \end{equation}
and we denote by $K_0(\Phi)$ the critical  values set of $\Phi$. If $p \in \C^{2n-1} \setminus K_0(\Phi)$, we set 
\[\mu_{X,Y}(p):=\chi (\Phi^{-1}(p)),\]
where $\chi(\Phi^{-1}(p))$ denotes the Euler Characteristic of   $\Phi^{-1}(p)$. The bifurcation set of $\Phi$ is denoted by $B(X,Y),$ see Definition \ref{d:bif}.  We also define the asymptotic part $B_\infty(X,Y)=B(X,Y)\setminus \overline{K_0(\Phi)},$ which contains only bifurcation points of $\Phi$ which comes from infinity. Note that it is not a standard definition.

As in \cite{JJR}, the set $B(X,Y)$ is also called a {\it symmetric defect} of $X$ and $Y$. The following result is obvious:
	
	\begin{proposition}\label{complete}
		Let $X,Y\subset \C^{2n-1}$ be $n$-dimensional strong complete intersections. Then, for a generic linear mapping $H: \C^{2n-1}\to\C^{2n-1}$, the sets $X$ and $H(Y)$ are in a general position.
\end{proposition}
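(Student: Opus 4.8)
The plan is to reduce the statement to a standard dimension count for a generic linear translate of one projective cone against another, so I would begin with the bookkeeping. For an invertible linear map $H$, the image $H(Y)$ is again an $n$-dimensional strong complete intersection, cut out by $g\circ H^{-1}$: the rank condition is preserved because $dH^{-1}\equiv H^{-1}$ is an isomorphism, and since $w\mapsto H^{-1}w$ is linear the highest homogeneous component of $g_i\circ H^{-1}$ is exactly $(g_i)_*\circ H^{-1}$, so condition 2) in the definition of a strong complete intersection is inherited from $Y$. Consequently $\bigcap_{i=1}^{n-1}\{(g_i\circ H^{-1})_*=0\}=H\big(\bigcap_{i=1}^{n-1}\{(g_i)_*=0\}\big)$, and the general position condition of Definition \ref{d:general} for the pair $(X,H(Y))$ becomes $\dim\big(V_f\cap H(V_g)\big)=1$, where $V_f=\bigcap_{i=1}^{n-1}\{(f_i)_*=0\}$ and $V_g=\bigcap_{i=1}^{n-1}\{(g_i)_*=0\}$. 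Since the non-invertible linear maps form a proper closed subset, it suffices to establish this equality for $H$ in a dense open subset of $GL_{2n-1}(\C)$.

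Next I would projectivize. The sets $V_f,V_g$ are cones defined by homogeneous polynomials, of pure dimension $n$ in $\C^{2n-1}$ by condition 2), so their projectivizations $A,B\subset\P^{2n-2}$ have pure dimension $n-1$; likewise $V_f\cap H(V_g)$ is a cone with projectivization $A\cap[H](B)$, where $[H]$ is the induced automorphism of $\P^{2n-2}$. Hence $\dim\big(V_f\cap H(V_g)\big)=\dim\big(A\cap[H](B)\big)+1$, and the proposition is equivalent to the assertion that $A\cap[H](B)$ is finite for generic $H$. Two classical facts about $\P^{2n-2}$ enter: (i) since $\dim A+\dim B=2n-2=\dim\P^{2n-2}$, the intersection $A\cap[H](B)$ is non-empty for \emph{every} $H$; and (ii) for a generic $H$ this intersection has the expected dimension $\dim A+\dim B-(2n-2)=0$. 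Fact (ii) is the crux; by (i) the affine dimension can never drop below $1$, which is exactly why Definition \ref{d:general} asks for dimension $1$ on the nose.

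For (ii) I would either invoke Kleiman's transversality theorem --- legitimate here since we work in characteristic zero and $PGL_{2n-1}(\C)$ acts transitively on $\P^{2n-2}$ --- or run the incidence-variety count directly. In the latter approach, fix irreducible components $A_j\subset A$ and $B_k\subset B$ (each of dimension $n-1$) and set $\Sigma_{jk}=\{(H,p)\in GL_{2n-1}(\C)\times A_j:\ H^{-1}(p)\in B_k\}$. Projecting to $A_j$ presents the fibre over $p$ as the preimage of $B_k$ under the smooth surjection $H\mapsto H^{-1}(p)$ onto $\P^{2n-2}$, of dimension $(n-1)+\big(\dim GL_{2n-1}-(2n-2)\big)$, so $\Sigma_{jk}$ has dimension $2(n-1)+\dim GL_{2n-1}-(2n-2)=\dim GL_{2n-1}$. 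The other projection $\Sigma_{jk}\to GL_{2n-1}(\C)$ is surjective by (i), so, its source and target having the same dimension, its generic fibre --- which is precisely $A_j\cap[H](B_k)$ --- is finite. Removing the finitely many proper closed bad loci attached to the pairs $(j,k)$ leaves a dense open set of $H$ for which $A\cap[H](B)=\bigcup_{j,k}A_j\cap[H](B_k)$ is finite, i.e. $\dim\big(V_f\cap H(V_g)\big)=1$. I do not anticipate a genuine obstacle; the only points needing attention are this component-wise incidence bookkeeping and the verification that $H(Y)$ remains a strong complete intersection. If one prefers not to pass to projective space, the same construction works verbatim with the affine cones and the group $GL_{2n-1}(\C)$, using the affine dimension theorem to see that every component of $V_f\cap H(V_g)$ through the origin has dimension at least $1$.
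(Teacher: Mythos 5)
Your argument is correct, and in fact it supplies a proof where the paper gives none: the paper simply declares Proposition \ref{complete} ``obvious'' and moves on. Your route --- check that $H(Y)$ is again a strong complete intersection with $(g_i\circ H^{-1})_*=(g_i)_*\circ H^{-1}$, reduce general position to the finiteness of $A\cap [H](B)$ for the projectivized cones $A,B\subset\mathbb{P}^{2n-2}$ of dimension $n-1$, and then get generic finiteness either from Kleiman transversality under the transitive $PGL$-action or from the incidence-variety dimension count $\dim\Sigma_{jk}=\dim GL_{2n-1}(\C)$ --- is the standard and complete justification, and the component-wise bookkeeping plus the restriction to the dense open set of invertible $H$ are handled correctly. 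One point worth making explicit: condition 2) of Definition 1.1, read literally, asks for $\dim\{(h_1)_*=\cdots=(h_{m-n})_*=0\}=m-n$, which for $m=2n-1$ would be $n-1$, strictly less than the minimum possible dimension $n$ of a set cut out by $n-1$ hypersurfaces in $\C^{2n-1}$; you silently interpret the condition as demanding the expected dimension $n$ (i.e., the leading forms define a cone of pure dimension $n$), which is clearly the intended reading and the one your dimension count requires, but it would be worth flagging that you are correcting an apparent typo rather than using the condition as stated.
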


In this note, we prove the following results related to the {\it symmetric defect}: 

\begin{theorem}\label{t:bif-euler} Let $X^n,Y^n\subset \C^{2n-1}$  be strong complete intersections in general positions. There exists  a number $\mu_{X,Y}$  such that, for any $p\notin \overline{K_0(\Phi)}$, we have  $p \notin   B_{\infty}(X,Y)$ if and only if  $\mu_{X,Y}=\mu_{X,Y}(p)$.  Moreover, the set $B(X,Y)$ is an algebraic set. 
\end{theorem}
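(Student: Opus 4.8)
The plan is to treat $\Phi$ as an algebraic family of affine curves, to bound its bifurcation locus by means of a proper compactification of that family, and to recognise the bad locus through the Euler characteristic of the fibres. Since the rank conditions in the definition of a strong complete intersection make $X$ and $Y$ smooth, $X\times Y$ is a smooth affine variety of dimension $2n$, so for a dominant $\Phi$ the generic fibre $\Phi^{-1}(p)$ is a smooth affine curve. Dominance is the first point to settle: the image of $d\Phi_{(x,y)}$ is $\tfrac12(T_xX+T_yY)$, and Definition~\ref{d:general} --- which, after passing to leading forms, bounds the intersection of the tangent cones at infinity to dimension one --- forces $T_xX+T_yY=\C^{2n-1}$ on a Zariski-dense subset of $X\times Y$; hence $\Phi$ is dominant and a submersion on a dense Zariski-open set. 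I set $\mu_{X,Y}:=\chi(\Phi^{-1}(p))$ for $p$ in a suitable Zariski-open set of regular values, and I record that $p\mapsto\chi(\Phi^{-1}(p))$ is a constructible function on $\C^{2n-1}$, equal to $\mu_{X,Y}$ on a Zariski-dense set, while $\overline{K_0(\Phi)}$ is algebraic, being the Zariski closure of the constructible set $K_0(\Phi)$.

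Because $X\times Y$ is smooth affine and $\Phi$ is polynomial, the theory of generalized critical values (Kurdyka--Orro--Simon, Rabier, Jelonek, Jelonek--Kurdyka, in the spirit of \cite{JJR}) shows that $\Phi$ is a $C^\infty$ locally trivial fibration over $\C^{2n-1}\setminus K(\Phi)$, where $K(\Phi)=\overline{K_0(\Phi)}\cup K_\infty(\Phi)$ and the asymptotic part $K_\infty(\Phi)$ is a closed \emph{complex algebraic} set of dimension at most $2n-2$. As $\C^{2n-1}$ is irreducible, $\C^{2n-1}\setminus K(\Phi)$ is connected in the classical topology, so $\chi(\Phi^{-1}(p))\equiv\mu_{X,Y}$ there; in particular $B(X,Y)\subseteq K(\Phi)$ and $B_\infty(X,Y)\subseteq K_\infty(\Phi)$. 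This already yields the ``only if'' implication of the theorem: if $p\notin\overline{K_0(\Phi)}$ and $p\notin B_\infty(X,Y)$ then $\Phi$ is a locally trivial fibration near $p$, whence $\chi(\Phi^{-1}(p))$ equals the generic value $\mu_{X,Y}$. Combined with the constructibility above and with Definition~\ref{d:bif}, it reduces the algebraicity of $B(X,Y)$ to showing that $B_\infty(X,Y)$ is cut out inside the algebraic set $K_\infty(\Phi)$ by the condition $\chi(\Phi^{-1}(p))\ne\mu_{X,Y}$.

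The substantive step is the converse: if $p\notin\overline{K_0(\Phi)}$ and $\chi(\Phi^{-1}(p))=\mu_{X,Y}$ then $\Phi$ is a locally trivial fibration near $p$. For this I would take the closure $\Gamma$ of the graph of $\Phi$ inside $(\overline X\times\overline Y)\times\mathbb{P}^{2n-1}$ and resolve it to a smooth variety $W$ carrying a \emph{proper} morphism $\tilde\Phi\colon W\to\mathbb{P}^{2n-1}$ for which $X\times Y$ is dense Zariski-open in $\tilde\Phi^{-1}(\C^{2n-1})$ and $D:=W\setminus(X\times Y)$ is a simple normal crossings divisor. By properness, Thom--Mather/Verdier stratification applied to $\tilde\Phi$ and to its restrictions to the strata of $D$ produces a proper algebraic subset $\Sigma\subset\mathbb{P}^{2n-1}$ over whose complement all these maps are locally trivial fibrations; since $\Phi^{-1}(q)=\tilde\Phi^{-1}(q)\setminus\bigl(D\cap\tilde\Phi^{-1}(q)\bigr)$, so is $\Phi$, and hence $B(X,Y)\subseteq\Sigma$. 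Now fix a small ball $U\ni p$ disjoint from $\overline{K_0(\Phi)}$: over $U$ the affine fibres are smooth and their topology can vary only through phenomena at infinity, so $\chi(\Phi^{-1}(q))$ is governed by the genus of the fibres together with the number $r(q)$ of punctures, i.e.\ the intersection number of $\overline{\Phi^{-1}(q)}$ with $D$; since $r$ is upper semicontinuous with generic value $r_{\mathrm{gen}}$, the hypothesis $\chi(\Phi^{-1}(p))=\mu_{X,Y}$ forces $r(p)=r_{\mathrm{gen}}$ and a transverse reduced intersection of $\tilde\Phi^{-1}(p)$ with $D$ --- exactly the condition for the stratified triviality at infinity to persist across $p$, so $p\notin\Sigma$ and $p\notin B_\infty(X,Y)$. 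Running the same analysis stratum by stratum exhibits $B(X,Y)$ as a finite union of algebraic sets, hence algebraic, which proves both assertions.

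I expect the behaviour at infinity to be the main obstacle: producing the resolution $W$ with a normal crossings boundary whose intersection multiplicities with the fibres genuinely account for the drop of $\chi$, and verifying that this drop is the only mechanism by which triviality can fail away from $\overline{K_0(\Phi)}$. This is exactly where Definition~\ref{d:general} is indispensable: the highest homogeneous components define cones symmetric under $z\mapsto-z$, so the general-position hypothesis bounds $X_\infty\cap Y_\infty$ (equivalently $X_\infty\cap(-Y_\infty)$), where $X_\infty=\overline X\cap H_\infty$, $Y_\infty=\overline Y\cap H_\infty$ and $H_\infty\subset\mathbb{P}^{2n-1}$ is the hyperplane at infinity; this finiteness of the set of directions along which a chord can escape to infinity with bounded midpoint is what keeps $K_\infty(\Phi)$ and the divisor $D$ of the expected small dimension. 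Finally, Proposition~\ref{complete} shows that the whole construction applies, after a generic linear change of coordinates on $Y$, to arbitrary $n$-dimensional strong complete intersections $X,Y\subset\C^{2n-1}$.
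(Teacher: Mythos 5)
Your overall architecture is the right one (characterize the atypical values outside $\overline{K_0(\Phi)}$ by the jump of the constructible function $p\mapsto\chi(\Phi^{-1}(p))$, then deduce algebraicity of $B(X,Y)$ from constructibility plus openness), and your "only if'' direction via generalized critical values is fine. But the substantive step --- showing that $\chi(\Phi^{-1}(p))=\mu_{X,Y}$ forces $p\notin B_\infty(X,Y)$ --- is where your argument has a genuine gap. You decompose $\chi(\Phi^{-1}(q))$ as ``genus of the compactified fibre minus number of punctures'' and argue that, since the number $r(q)$ of points at infinity is semicontinuous, constancy of the affine $\chi$ forces $r(p)=r_{\mathrm{gen}}$ and a transverse reduced intersection with the boundary divisor $D$. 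This does not follow: the Euler characteristic of the closure $\overline{\Phi^{-1}(q)}$ in your compactification can itself jump (the closure may acquire singularities on $D$, or embedded/extra components inside $D$ may appear in $\tilde\Phi^{-1}(q)$), and a priori such a jump could cancel against a change in $r(q)$, leaving the affine $\chi$ unchanged while the family still degenerates at infinity. Ruling out this cancellation is exactly the content of the theorem, and your sketch replaces it by an assertion. Moreover, even granting $r(p)=r_{\mathrm{gen}}$ and transversality of the closure of the fibre with $D$, the conclusion that the stratified trivialization of $\tilde\Phi$ extends across $p$ needs an argument (a Thom-type condition or an explicit vector field), since $p$ could still lie in the discriminant of $\tilde\Phi$ restricted to deeper strata of $D$.

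The paper closes precisely this gap with a different mechanism, and it is worth seeing why your use of the general-position hypothesis is not yet doing real work. General position is used (Lemma \ref{l:L-proper}) to produce a \emph{linear} function $L$ with $(\Phi,L):X\times Y\to\C^{2n-1}\times\C$ proper; then each smooth fibre $\Phi^{-1}(p)$ becomes a degree-$d$ branched cover of $\C$ via $L$, and the Riemann--Hurwitz count $\chi(\Phi^{-1}(p))=d-\sum_i(\rho_i(p)-1)$ turns constancy of $\chi$ into the statement that no branch point of $L_{|\Phi^{-1}(p)}$ escapes to infinity as $p\to p_0$ (properness of $(\Phi,L)$ is what makes the branching count behave semicontinuously in one definite direction, which is the monotonicity your genus-plus-punctures heuristic lacks). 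Once the branch locus stays in a compact set, an explicit vector field tangent to $X\times Y$, lifting $p-p_0$ under $d\Phi$ and annihilating $dL$, trivializes $\Phi$ outside a compact set, giving $p_0\notin B_\infty(X,Y)$; the algebraicity of $B(X,Y)$ then follows from Theorem \ref{construct} as you indicate. To repair your proof you would need either to import this auxiliary proper $L$ (or the corresponding results of \cite{VT2}), or to prove a genuine semicontinuity statement for the boundary behaviour of the compactified fibres; as written, the key implication is unproved.
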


\begin{theorem}\label{t:bif-euler1} Let $X^n,Y^n\subset \C^{2n-1}$  be strong complete intersections in general position  of multi-degrees 
	$(a_1,...,a_{n-1})$ and $(b_1,...,b_{n-1})$, respectively. Then   $B_\infty(X,Y)$  is contained in a hypersurface  $L_{\infty}(\Phi)$ such that \[ \deg L_{\infty}(\Phi) \leq D +d - \mu_{X,Y},\]
	where $D:=\deg \overline{\Sing(\Phi,L)\setminus\Sing(\Phi)}$, $d:=\mu(\Phi,L)$, $\mu(\Phi,L)$ is the  geometric degree of $(\Phi,L)$ and $L:\C^{2n-1}\to \C$ denotes a generic linear mapping.  In particular,  \[ \deg L_{\infty}(\Phi) \leq D -1 \leq \prod^{n-1}_{i=1} a_i \prod^{n-1}_{i=1} b_i (\sum^{n-1}_{i=1} a_i+b_i-2)-1.\]
	
\end{theorem}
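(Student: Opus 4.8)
The plan is to reduce Theorem~\ref{t:bif-euler1} to the study of the generically finite auxiliary map $F=(\Phi,L)\colon X\times Y\to\C^{2n}$ and then to extract the explicit estimate from a B\'ezout bound on the critical locus that defines $D$. First I would check that, for a generic linear form $L\colon\C^{2n-1}\to\C$, the map $F=(\Phi,L)$ is dominant; since $\dim(X\times Y)=2n=\dim\C^{2n}$ it is then generically finite and $d=\mu(\Phi,L)$ is the cardinality of a generic fibre of $F$. The point is that by Theorem~\ref{t:bif-euler} the generic fibre $C_p:=\Phi^{-1}(p)$ is a smooth affine curve, on which a generic linear function restricts to a non-constant map $g_p:=L|_{C_p}\colon C_p\to\C$; hence $L$ separates the sheets of $\Phi$ and, for generic $p$, $g_p$ has exactly $d$ points in a generic fibre. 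Let $S_F\subset\C^{2n}$ be the set of points at which $F$ is not proper; by Jelonek's theorem on the non-properness set, $S_F$ is empty or a hypersurface, and outside $S_F$ together with the discriminant of $F$ the map $F$ is an unramified $d$-sheeted covering.

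Next I would show that, for $p\notin\overline{K_0(\Phi)}$, the family $\{C_p\}$ fails to be a locally trivial fibration at $p$ only through a degeneration ``at infinity'': the line $\ell_p=\{p\}\times\C\subset\C^{2n}$ meets the non-proper part of $S_F$, or a point at infinity of $\overline{C_p}$ becomes critical for $L$ or collides with another such point. Fixing a projective closure of $X\times Y$ and letting $\Gamma:=\overline{\Sing(\Phi,L)\setminus\Sing(\Phi)}$ be the closure of the locus of critical points of the functions $g_p$, I would define $L_\infty(\Phi)$ to be the union of the closure of $\Phi$ applied to the part of $\overline{\Gamma}$ lying over infinity with the trace on $\C^{2n-1}$ of the asymptotic variety of $F$; general position of $X,Y$ and genericity of $L$ guarantee this is algebraic of dimension $\le 2n-2$, i.e.\ a hypersurface. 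The inclusion $B_\infty(X,Y)\subseteq L_\infty(\Phi)$ then follows from the algebraic fibration theorem away from the generalized critical values: outside $L_\infty(\Phi)$ the curves $C_p$ have constant behaviour at infinity, $g_p$ is tame, hence $\Phi$ is locally trivial there and $\mu_{X,Y}(p)$ is locally constant.

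For the first degree estimate I would use the Euler-characteristic bookkeeping for $g_p$ on a generic curve $C_p$,
\[
\mu_{X,Y}=\chi(C_p)=d-\#\{\text{critical points of }g_p\}-\big(\text{defect of }g_p\text{ at infinity}\big),
\]
noting that the critical points of $g_p$ form exactly $\Gamma\cap C_p$ and that, running $p$ through a generic pencil of lines $\ell_p$, the last term—after subtracting the ramification carried by $\Gamma$ (total degree $D$)—turns into $\deg L_\infty(\Phi)$; this gives $\deg L_\infty(\Phi)\le D+d-\mu_{X,Y}$. For the explicit bound I would estimate $D$ directly: a point $(x,y)\in X\times Y$ lies in $\Gamma$ precisely when $T_xX\cap T_yY$ contains a non-zero vector in $\ker L$, that is, when
\[
f_1(x)=\dots=f_{n-1}(x)=0,\qquad g_1(y)=\dots=g_{n-1}(y)=0,\qquad
\det\!\begin{pmatrix} df(x)\\ dg(y)\\ L\end{pmatrix}=0 .
\]
These are $2n-1$ equations of total degrees $a_1,\dots,a_{n-1},\,b_1,\dots,b_{n-1}$ and $\sum_{i=1}^{n-1}(a_i+b_i-2)$ on $\C^{2n-1}\times\C^{2n-1}$; by general position of $X,Y$ and of $L$ they cut out a set of the expected dimension $2n-1=\dim\Gamma$, so B\'ezout yields $D\le\prod_{i=1}^{n-1}a_i\prod_{i=1}^{n-1}b_i\big(\sum_{i=1}^{n-1}(a_i+b_i-2)\big)$. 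Finally, since $L_\infty(\Phi)$ is a proper algebraic subset of the degree-$D$ hypersurface produced above—it omits the components coming from genuine finite critical values of the $g_p$—its degree drops by at least one, giving $\deg L_\infty(\Phi)\le D-1$ and hence the full chain of inequalities.

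The technical heart lies in the middle steps: one must isolate which asymptotic invariants of the family $\{C_p\}$ actually govern $B_\infty$, show that they are organised by a genuine algebraic hypersurface of the predicted dimension, and convert the local ``defects at infinity'' into the degree of that hypersurface. This is where general position of $X$ and $Y$ and genericity of $L$ are used essentially—to ensure that $(\Phi,L)$ is generically finite, that $\Gamma$ has the expected dimension and degree, and that $S_F$ has no unwanted component lying over a hypersurface of $\C^{2n-1}$. The remaining ingredients—B\'ezout, Jelonek's theorem on non-properness sets, and the algebraic fibration theorem outside the generalized critical values—are standard once this framework is in place.
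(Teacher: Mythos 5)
Your overall architecture is the right one -- you correctly identify the critical locus $\Gamma=\overline{\Sing(\Phi,L)\setminus\Sing(\Phi)}$ as the object controlling $B_\infty(X,Y)$, your characterization of $\Gamma$ by the condition $T_xX\cap T_yY\cap\ker L\neq 0$ and the resulting B\'ezout count for $D$ match what the paper imports from \cite[Proposition 2.8]{JJR}, and the inclusion $B_\infty(X,Y)\subset L_\infty(\Phi)$ via ``no critical point of $L|_{\Phi^{-1}(p)}$ escapes to infinity $\Rightarrow$ local triviality'' is exactly the mechanism extracted from the proof of Theorem \ref{t:bif-euler}. But the step that actually produces the degree bounds is missing. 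The paper's argument is: $L_\infty(\Phi)$ \emph{is} the non-properness set of the restriction $\Phi|_S\colon S\to\C^{2n-1}$, where $S=\Gamma$ has dimension $2n-1$ equal to that of the target, degree $D$, and $\Phi$ has coordinate functions of degree $1$; one checks $\Phi|_S$ is generically finite and dominant, and then Theorem 2.2 (Jelonek's quantitative bound on non-properness sets, applied to $\Phi|_S$, \emph{not} to $(\Phi,L)$ on $X\times Y$ as in your sketch) gives $\deg L_\infty(\Phi)\leq D-\mu(\Phi|_S)$. From there, $\mu(\Phi|_S)\geq 1$ yields $D-1$, and the Riemann--Hurwitz identity \eqref{eq:hur} identifies $\mu(\Phi|_S)$ with the number $r(p)$ of critical points of $L|_{\Phi^{-1}(p)}$ and gives $-\mu(\Phi|_S)\leq d-\mu_{X,Y}$, hence $D+d-\mu_{X,Y}$. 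Your substitutes for this -- ``running $p$ through a generic pencil \dots the last term turns into $\deg L_\infty(\Phi)$'' for the first bound, and ``$L_\infty(\Phi)$ is a proper subset of the degree-$D$ hypersurface, so its degree drops by one'' for the second -- are not proofs: the first is an unsubstantiated assertion, and the second confuses a subvariety of $X\times Y\subset\C^{4n-2}$ with a hypersurface in the target $\C^{2n-1}$ (a proper closed subset drops in dimension, not in degree, and $L_\infty(\Phi)$ does not sit inside $\Gamma$ at all).

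A secondary but consequential point: your framework allows $S_F\neq\emptyset$ and inserts a ``defect of $g_p$ at infinity'' into the Euler-characteristic count. The paper's Lemma \ref{l:L-proper} shows that for suitable $L$ (generic works) the map $(\Phi,L)$ is \emph{proper}, so $S_{(\Phi,L)}=\emptyset$, $L|_{\Phi^{-1}(p)}$ is proper for every $p$, and the Riemann--Hurwitz formula has no boundary term: $\chi(\Phi^{-1}(p))=d-\sum_{i=1}^{r(p)}(\rho_i(p)-1)$, which also corrects your count $d-\#\{\text{critical points}\}$ (valid only when all $\rho_i=2$). This properness is not an optional simplification; it is what makes the escape to infinity of points of $\Gamma$ the \emph{only} source of asymptotic bifurcation and what lets the whole estimate be channelled through the single map $\Phi|_S$. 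Without invoking it, and without applying the non-properness degree bound to $\Phi|_S$, the stated inequalities are not derived.
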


Also, we have the following generic case:   

\begin{theorem}\label{ost1} Let $X^n,Y^n\subset \C^{2n-1}$  be strong complete intersections. For a generic linear mapping $H:\C^{2n-1} \to \C^{2n-1}$, the sets $X^n$ and $H(Y^n)$ are in general positions and the number $\mu_{X,H(Y)}$ does not depend of $H$. Moreover, for generic $H$ all sets $B(X,H(Y))$ are ambient homeomorphic, i.e, pairs $(\C^{2n-1}, B(X,H(Y)))$ and $(\C^{2n-1}, B(X,H'(Y)))$ are homeomorphic for generic 
	$H,H'.$
\end{theorem}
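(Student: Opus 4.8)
The first assertion is immediate from Proposition~\ref{complete}: the general-position requirement of Definition~\ref{d:general} is a Zariski-open condition on the space $\mathcal{L}\cong\C^{(2n-1)^2}$ of linear endomorphisms of $\C^{2n-1}$, and that proposition guarantees it is non-empty; let $U\subseteq\mathcal{L}$ be the corresponding Zariski-open dense subset, whose members are moreover invertible. For $H\in U$ write $\Phi_H(x,y)=\frac{1}{2}(x+H(y))$; by Theorem~\ref{t:bif-euler} the symmetry defect $B(X,H(Y))$ is then algebraic, and $\mu_{X,H(Y)}$ is the common value of $\mu_{X,H(Y)}(p)$ for $p$ outside $\overline{K_0(\Phi_H)}\cup B_\infty(X,H(Y))$. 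The plan for the two remaining statements is to package everything into a single algebraic family over $U$ and then invoke, respectively, constructibility of Euler characteristics in families and the Thom--Mather first isotopy lemma.

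To prove that $\mu_{X,H(Y)}$ is independent of generic $H$, I would introduce the incidence variety
\[
\mathcal{Z}=\bigl\{(H,x,y,p)\in U\times X\times Y\times\C^{2n-1}\ :\ p=\tfrac{1}{2}(x+H(y))\bigr\}
\]
and the projection $\Psi\colon\mathcal{Z}\to U\times\C^{2n-1}$, whose fibre over $(H,p)$ is $\Phi_H^{-1}(p)$. Since Euler characteristics of the fibres of a morphism of complex algebraic varieties are constructible on the target, the function $(H,p)\mapsto\chi(\Psi^{-1}(H,p))$ is constructible on $U\times\C^{2n-1}$; fix a stratification of $U\times\C^{2n-1}$ into locally closed sets on which it is constant. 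By Theorem~\ref{t:bif-euler1} the complement $V_H$ of $\overline{K_0(\Phi_H)}\cup B_\infty(X,H(Y))$ is Zariski-open dense in $\C^{2n-1}$, so for generic $H$ it meets the open stratum, and there $\chi(\Phi_H^{-1}(p))=\mu_{X,H(Y)}$ by Theorem~\ref{t:bif-euler}. Hence $\mu_{X,H(Y)}$ equals the (fixed) value of $\chi$ on that stratum for all $H$ in a Zariski-open dense $U_2\subseteq U$, which is the asserted independence.

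For the ambient homeomorphism type of $B(X,H(Y))$, I would form the universal symmetry defect $\mathcal{B}\subseteq U\times\C^{2n-1}$ whose fibre over $H$ is $B(X,H(Y))$. By the constructions underlying Theorems~\ref{t:bif-euler} and~\ref{t:bif-euler1}, this set is assembled from $\overline{K_0(\Phi_H)}$ and from the hypersurface $L_\infty(\Phi_H)$ of Theorem~\ref{t:bif-euler1}, both of which depend algebraically on $H$, so $\mathcal{B}$ is an algebraic subset of $U\times\C^{2n-1}$. Passing to the fibrewise projective closure $\overline{\mathcal{B}}\subseteq U\times\P^{2n-1}$, adjoining the hyperplane at infinity $U\times H_\infty$, and choosing a Whitney stratification of $U\times\P^{2n-1}$ compatible with both $\overline{\mathcal{B}}$ and $U\times H_\infty$, standard generic-smoothness arguments produce a Zariski-open dense $U_3\subseteq U_2$ over which the first projection $\pi\colon U\times\P^{2n-1}\to U$ restricts to a submersion on every stratum. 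As $\pi$ is proper, the Thom--Mather first isotopy lemma gives a stratum-preserving locally trivial stratified fibration over $U_3$; fixing $H_0\in U_3$ yields homeomorphisms $(\P^{2n-1},\overline{B(X,H(Y))})\cong(\P^{2n-1},\overline{B(X,H_0(Y))})$ carrying $H_\infty$ to $H_\infty$, hence homeomorphisms $(\C^{2n-1},B(X,H(Y)))\cong(\C^{2n-1},B(X,H_0(Y)))$. Since $U_3$ is Zariski-open dense in the irreducible $\mathcal{L}$, it is connected, so all pairs $(\C^{2n-1},B(X,H(Y)))$ with $H\in U_3$ are mutually ambient homeomorphic, as claimed.

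The main obstacle I anticipate is the behaviour at infinity. One must check that the family $\mathcal{B}$ genuinely extends to an algebraic (or at least equisingular) family inside $U\times\P^{2n-1}$ whose Whitney stratification can be chosen compatible with the hyperplane at infinity, so that the Thom--Mather trivialization preserves $H_\infty$ and descends to $\C^{2n-1}$; equivalently, one must control how $\overline{K_0(\Phi_H)}$ and the asymptotic bifurcation set $B_\infty(X,H(Y))$ --- including the asymptotic critical values of $\Phi_H$ --- vary with $H$ and verify that their union has generically constant singularity type along $U$. The constructibility arguments above, together with the genericity already present in Theorems~\ref{t:bif-euler} and~\ref{t:bif-euler1}, should furnish this, but it is the step demanding the most care.
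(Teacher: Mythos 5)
Your overall strategy coincides with the paper's: both proofs package everything into a single algebraic family over $\mathcal{L}(\C^{2n-1},\C^{2n-1})$ via $\Psi(x,y,H)=(\frac{x+H(y)}{2},H)$, use constructibility of the fibrewise Euler characteristic (the paper's Theorem \ref{construct}) to get independence of $\mu_{X,H(Y)}$, and obtain the ambient homeomorphism by trivializing the projection $\pi$ to $\mathcal{L}(\C^{2n-1},\C^{2n-1})$ over a Zariski-open set via compactification, a compatible Whitney stratification, and the Thom isotopy lemma. The only real difference of route is that you re-derive this last step by hand, whereas the paper has already encapsulated it as Theorem \ref{fiber} and Corollary \ref{c:fiber} (homeomorphism of the pairs $(\pi^{-1}(H), Z\cap\pi^{-1}(H))$ for a constructible $Z$) and simply applies the corollary.

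There is, however, one genuine gap in your third paragraph: you build the universal defect $\mathcal{B}$ with fibre $\overline{K_0(\Phi_H)}\cup L_\infty(\Phi_H)$. But Theorem \ref{t:bif-euler1} only gives the inclusion $B_\infty(X,H(Y))\subset L_\infty(\Phi_H)$, not equality, so this fibre is a priori a strictly larger algebraic set than $B(X,H(Y))$; trivializing the pair $(\P^{2n-1},\overline{\mathcal{B}})$ then yields homeomorphisms matching up these larger sets, with no guarantee that they carry $B(X,H(Y))$ onto $B(X,H'(Y))$. The correct family description is the one coming from the Euler-characteristic criterion of Theorem \ref{t:bif-euler}: with $k$ the typical Euler characteristic of $\Psi$ and $B_k=\{q:\chi(\Psi^{-1}(q))\neq k\}$ (constructible by Theorem \ref{construct}), one has $B(X,H(Y))=\pi^{-1}(H)\cap(B_k\cup\overline{K_0(\Psi)})$ for generic $H$; one also needs the (non-automatic) identity $\pi^{-1}(H)\cap\overline{K_0(\Psi)}=\overline{K_0(\Psi_H)}$, which the paper extracts from the proof of Theorem \ref{fiber}. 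Your own second paragraph already sets up exactly the constructible set $B_k$ needed here, so the repair is to make the Whitney stratification compatible with $B_k\cup\overline{K_0(\Psi)}$ (i.e., to invoke Corollary \ref{c:fiber} with $Z=B_k\cup\overline{K_0(\Psi)}$) rather than with the over-estimate by $L_\infty$.
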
	

This  allows us to define the set $\tilde{B}(X,Y):=B(X,H(Y))$ (where $H$ is  generic)  for every two strong complete intersections $X^n,Y^n\subset \C^{2n-1}$ (including the case $X=Y$) at least up to  homeomorphism.  It may happen that $\tilde{B}(X,Y)\not\cong B(X,Y)$.

Theorem \ref{t:bif-euler} and \ref{t:bif-euler1} are proved in \S\ref{ss:proof-1-2} and  Theorem \ref{ost1} in \S\ref{ss:last}. 

We point out that a version of Theorem \ref{t:bif-euler} was presented in  \cite{VT2} for a class of polynomial functions from $\C^{n}$ to  $\C^{n-1}$ and in \cite{JT} for  morphisms from $X$ to $Y$, with $X, Y$ affine varieties and $\dim X=\dim Y+1$. However, the algebraicity of the bifurcation set was not discussed in those papers.  
 
\section{Proofs of the results}\label{s:2}

\subsection{Non-properness set}

We recall some results on the non-properness set of a polynomial mapping. We follow \cite{JJR, jel, jel1}. Let $Z_1, Z_2$ be affine varieties over $\C$.

\begin{definition}
Let $f : Z_1 \rightarrow Z_2$ be a  generically finite (i.e. a
generic fiber is finite) and dominant polynomial mapping (i.e. $\overline{f(Z_1)}=Z_2$). We say that $f$ {\it is proper at} $y \in Z_2,$ if  there exists an open
neighborhood $U$ of $y$ such that  the mapping $ f\mid_{f^{-1}(U)} :f^{-1} (U)\rightarrow U$ is a proper mapping.
\end{definition}

We denote by  $S_f$ the set of points at which
the mapping $f$ is not proper. The set  
$S_f$ is called the {\it non-properness} set of $f.$ 

Let $Z\subset \C^m$ be an affine variety of codimension $k$. The degree of $Z$ is the number of common points of $Z$ and sufficiently general linear subspace $V \subset \C^m$ of dimension $k$. The degree of $Z$ is denoted by $\deg Z$. We have  $\deg \C^m =1$, for any $m \in \N$. 

The following result is useful (see for instance \cite{JJR, jel, jel1}):

\begin{theorem} Let $f=(f_1,...,f_n):Z\to \C^n $ be a generically finite dominant polynomial mapping, where $Z\subset \C^m$ is an affine $n$-dimensional variety with $\deg Z= D$. Then   $S_f$   is either empty or a hypersurface with  
		$${\rm deg}  \  S_f  \leq  \frac{
		D(\prod^n_{i=1} {\rm deg} \ f_i) - \mu (f)} {\min_{1\le i \le
			n}\deg\ f_i},$$ where $\mu(f)=(\C(X):\C(f_1,...,f_n))$ is the
	geometric degree of $f.$ 
\end{theorem}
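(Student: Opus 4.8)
The plan is to follow Jelonek's method for the non‑properness set. First I would recall that $S_f$ is empty or a hypersurface --- this is a theorem of Jelonek (\cite{jel}) --- so that, if it is nonempty, $\deg S_f$ is the number of points of $S_f\cap\ell$ for a generic line $\ell\subset\C^n$; if $S_f=\emptyset$ there is nothing to prove, since $\mu(f)\le D\prod_i\deg f_i$ by B\'ezout. The whole game is then to pick $\ell$ in a direction adapted to the smallest of the degrees $\deg f_i$.

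Reindex so that $\delta:=\deg f_1=\min_i\deg f_i$. For generic $\mu=(\mu_1,\dots,\mu_{n-1})\in\C^{n-1}$ put $g_j:=f_{j+1}+\mu_j f_1$ (so $\deg g_j=\deg f_{j+1}$), and for $c\in\C^{n-1}$ set $C_c:=\{z\in Z:g_1(z)=c_1,\dots,g_{n-1}(z)=c_{n-1}\}$. On the target the common level sets of $(g_1,\dots,g_{n-1})$ are the affine lines $\ell_c$ of direction $v=(1,-\mu_1,\dots,-\mu_{n-1})$, and $C_c=f^{-1}(\ell_c)$; for generic $c$ the set $C_c$ is a curve with $\deg\overline{C_c}\le D\prod_j\deg f_{j+1}=D\bigl(\textstyle\prod_i\deg f_i\bigr)/\delta$ by B\'ezout. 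Identifying $\ell_c$ with $\C$ via $w\mapsto w_1$, the map $f|_{C_c}$ becomes the regular function $\varphi:=f_1|_{C_c}$ of degree $\le\delta$, whose generic fibre is a fibre of $f$, so $\mu(\varphi)=\mu(f)$. I would then invoke two standard facts: the localization property of non‑properness sets (\cite{jel,jel1}), which gives $S_f\cap\ell_c=S_{f|_{C_c}}=S_\varphi$ for generic $c$ (tautologically $S_f\cap\ell_c=\pi_2(\overline{\Gamma_f}\cap(H^m_\infty\times\ell_c))$, and for generic $\ell_c$ the curve $\overline{\Gamma_f}\cap(\P^m\times\overline{\ell_c})$ is the closure of the graph of $\varphi$); and the fact that, since every component of $\overline{S_f}$ meets $\C^n$, the set $\overline{S_f}\cap H^n_\infty$ is a proper subvariety of the hyperplane at infinity, so for generic $\mu$ one has $[v]\notin\overline{S_f}$ and hence $\overline{\ell_c}$ meets $\overline{S_f}$ in $\deg S_f$ distinct points of $\C^n$, i.e. $\#(S_f\cap\ell_c)=\deg S_f$.

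It then remains to bound $\#S_\varphi$ for a regular function $\varphi$ of degree $\le\delta$ on an affine curve $C$ with $\overline C\subset\P^N$, in terms of $\gamma:=\deg\overline C$ and $\mu(\varphi)$. Passing to the normalization $\nu\colon\widetilde C\to\overline C$ and letting $T:=\nu^{-1}(\overline C\cap H_\infty)$, with branch multiplicities $\nu_q$ satisfying $\sum_{q\in T}\nu_q=\overline C\cdot H_\infty=\gamma$, the non‑properness values of $\varphi$ are exactly the finite values attained at points of $T$ by the extension $\widetilde\varphi\colon\widetilde C\to\P^1$, whence
\[
\#S_\varphi\ \le\ \#\{q\in T:\widetilde\varphi(q)\ne\infty\}\ \le\!\!\sum_{\substack{q\in T\\ \widetilde\varphi(q)\ne\infty}}\!\!\nu_q\ =\ \gamma-\!\!\sum_{\substack{q\in T\\ \widetilde\varphi(q)=\infty}}\!\!\nu_q .
\]
Writing $\varphi=\Phi/x_0^{\delta}$ with $\Phi$ the homogenization of a degree‑$\delta$ representative, the pole order of $\varphi$ at a point $q$ over infinity is at most $\delta\,\nu_q$, while these pole orders sum to $\deg\widetilde\varphi=\mu(\varphi)$; hence $\sum_{q\ \mathrm{pole}}\nu_q\ge\mu(\varphi)/\delta$ and $\#S_\varphi\le\gamma-\mu(\varphi)/\delta$. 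Combining,
\[
\deg S_f=\#(S_f\cap\ell_c)=\#S_\varphi\ \le\ \frac{\deg\overline{C_c}\cdot\delta-\mu(f)}{\delta}\ \le\ \frac{D\prod_{i=1}^n\deg f_i-\mu(f)}{\min_i\deg f_i}.
\]

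I expect the main obstacle to be the choice of slicing line: it must simultaneously be adapted to $f_1$ (to keep $\deg f^{-1}(\ell)$ equal to $D\prod_{i\ne 1}\deg f_i$, which forces the direction $v$) and be generic with respect to $S_f$ (to realize $\deg S_f$ and to make the non‑properness set localize along it), and reconciling these two requirements is exactly what the two "standard facts" above package. The other essential input is that $S_f$ is known beforehand to be a hypersurface, which I would simply quote from \cite{jel}.
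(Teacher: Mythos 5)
The paper does not prove this statement at all: it is quoted as a known result with the pointer ``see for instance \cite{JJR, jel, jel1}'', so there is no in-paper argument to compare against. Your proof is a faithful and essentially correct reconstruction of Jelonek's method from those references --- slicing the target by a generic line whose direction is adapted to the coordinate of minimal degree, bounding the degree of the preimage curve by B\'ezout, and then accounting for the $-\mu(f)$ term by observing that the $\mu(f)$ sheets must escape through poles of order at most $\delta\nu_q$ at the branches at infinity; the two ``standard facts'' you isolate (purity/hypersurface structure of $S_f$ and the localization $S_f\cap\ell=S_{f|_{f^{-1}(\ell)}}$ for generic $\ell$) are indeed exactly what \cite{jel, jel1} supply, and your graph-closure justification of the localization is sound since $\overline{\Gamma_f}\cap(H_\infty^m\times\C^n)$ has dimension at most $n-1$, so its intersection with $\pi_2^{-1}(\ell)$ is finite for generic $\ell$ and no component of $\overline{\Gamma_f}\cap(\P^m\times\ell)$ lies at infinity.
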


Theorems \ref{t:bif-euler}, \ref{t:bif-euler1} and \ref{ost1} are related with the following definition: 

\begin{definition}\label{d:bif}  Let $f : Z_1\to Z_2$ be a polynomial mapping. The  bifurcation set of  $f$, denoted by $B(f)$,  is the smallest subset of $Z_2$ such that the restriction $f_|:Z_1 \setminus f^{-1}(B(f)) \to Z_2\setminus B(f)$  is a $C^{\infty}$ locally trivial   fibration. 
\end{definition}

Let $Z_1,Z_2$ be  smooth $n$-dimensional varieties over $\C$
and  $f : Z_1\to Z_2 $ a dominant polynomial mapping. We denote by 
 $\mu(f)$  the geometric degree of $f$.

The following useful result follows from the proof of  Theorem 2.4 of \cite{JJR}: 
 
\begin{theorem}[{\cite[Theorem 2.4]{JJR}}]\label{jk}
Let $Z$ be a smooth affine complex variety of dimension $n.$ Let
$f: Z \to \C^n$ be a dominant polynomial  mapping.
Then,  
\[B(f)=K_0(f)\cup S_f,\]
and
\[B(f)=\{y\in  \C^n : \# f^{-1}(y)\not=\mu(f)\}.\]

Moreover, $B(f)$ is either empty (and so $f$ is an isomorphism)  or
 a closed hypersurface. 
\end{theorem}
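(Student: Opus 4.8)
The plan is to establish the chain of inclusions
\[ B(f)\ \subseteq\ K_0(f)\cup S_f\ \subseteq\ \{y\in\C^n:\#f^{-1}(y)\neq\mu(f)\}\ \subseteq\ B(f), \]
which forces the three sets to coincide, and then to read off the final clause from the resulting description.

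\emph{The good locus.} Set $\Omega:=\C^n\setminus(K_0(f)\cup S_f)$. I would first check that $K_0(f)\cup S_f$ is a closed algebraic set sitting inside a proper subvariety, so that $\Omega$ is connected and dense. The ramification locus $R:=\{x\in Z:\mathrm{rank}\,df_x<n\}$ is, since $f$ is dominant, either empty or a hypersurface of $Z$, and $K_0(f)=f(R)$, so $\dim\overline{K_0(f)}\le n-1$. By the non-properness theorem quoted above (see \cite{jel,jel1}), $S_f$ is empty or a pure-codimension-one hypersurface. If $y\notin S_f$ then $f$ is proper over a neighbourhood $V$ of $y$, hence $f(R)\cap V=f\big(R\cap f^{-1}(V)\big)$ is closed in $V$; this gives $\overline{K_0(f)}\subseteq K_0(f)\cup S_f$, so $K_0(f)\cup S_f=\overline{f(R)}\cup S_f$ is a closed hypersurface. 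Over $\Omega$ the restriction $f\colon f^{-1}(\Omega)\to\Omega$ has no critical points and is proper, hence is a proper local biholomorphism, i.e. a finite topological covering; its degree is locally constant, so constant on the connected set $\Omega$, and equal to the generic fibre cardinality $\mu(f)$. Thus $f$ restricts to a $C^\infty$ locally trivial fibration over $\Omega$, so by minimality $B(f)\subseteq K_0(f)\cup S_f$, and $\#f^{-1}(y)=\mu(f)$ for every $y\in\Omega$.

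\emph{Detecting the fibre count.} If $f$ is a locally trivial fibration over some connected neighbourhood $W$ of a point $y$, then $\#f^{-1}(\cdot)$ is constant on $W$; since $W$ meets the dense set $\Omega$, where this count equals $\mu(f)$, we get $\#f^{-1}(y)=\mu(f)$. Hence $\{y:\#f^{-1}(y)\neq\mu(f)\}\subseteq B(f)$. To close the chain it remains to prove $K_0(f)\cup S_f\subseteq\{y:\#f^{-1}(y)\neq\mu(f)\}$. Fix $y\in K_0(f)\cup S_f$; if $f^{-1}(y)$ is infinite we are done, so assume $f^{-1}(y)=\{x_1,\dots,x_m\}$ is finite, and pick $y_k\to y$ with $y_k\in\Omega$. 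Near each $x_i$ the map $f$ is finite with a local degree $d_i\ge1$, and $d_i\ge2$ exactly when $x_i\in R$; since $y_k\notin K_0(f)$, the fibre $f^{-1}(y_k)$ contains exactly $d_i$ distinct points near $x_i$, hence $\sum_i d_i$ points near $f^{-1}(y)$, and exactly $\mu(f)$ points in total by the previous step. Therefore $m\le\sum_i d_i\le\mu(f)$. If $y\in K_0(f)$ some $d_i\ge2$, so $m<\sum_i d_i\le\mu(f)$. If $m=\mu(f)$ then $\sum_i d_i=\mu(f)$, so every $x_i$ is regular and, moreover, for all $y_k\in\Omega$ near $y$ the whole fibre $f^{-1}(y_k)$ lies in a fixed bounded neighbourhood of $f^{-1}(y)$; by the covering structure over $\C^n\setminus S_f$ this forces $f$ to be proper at $y$, contradicting $y\in S_f$. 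So in every case $\#f^{-1}(y)\neq\mu(f)$, the chain closes, and $B(f)=K_0(f)\cup S_f=\{y:\#f^{-1}(y)\neq\mu(f)\}$ is the closed hypersurface above — empty precisely when $f$ is an unramified proper birational morphism, i.e. an isomorphism.

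\emph{Main obstacle.} The delicate point is the last one: for $y\in S_f$ with $f^{-1}(y)$ finite and all of its points regular, one must exclude $\#f^{-1}(y)=\mu(f)$, i.e. genuinely show that at least one point of the nearby generic fibres escapes to infinity over $y$. This is exactly where the fine structure of the non-properness set is needed; rigorously it is cleanest either to invoke the relevant statements of \cite{jel,jel1}, or to argue via a compactification: embed $Z$ in a projective variety $\overline{Z}$, extend $f$ to a proper map $\overline{f}$, and control the points of $\overline{f}^{-1}(y)$ lying on $\overline{Z}\setminus Z$, which are non-empty exactly when $y\in\overline{S_f}$.
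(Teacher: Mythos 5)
First, note that the paper does not prove this statement at all: it is quoted verbatim as \cite[Theorem 2.4]{JJR} and used as a black box, so there is no in-paper argument to compare yours against. Your reconstruction via the chain $B(f)\subseteq K_0(f)\cup S_f\subseteq\{y:\#f^{-1}(y)\neq\mu(f)\}\subseteq B(f)$ is the right architecture, and the first two paragraphs (closedness of $K_0(f)\cup S_f$, the covering structure over $\Omega$, the local-degree count forcing $\#f^{-1}(y)<\mu(f)$ when $y\in K_0(f)$) are essentially sound.

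There are, however, two genuine gaps. The main one is the step you yourself flag: for $y\in S_f\setminus K_0(f)$ with $f^{-1}(y)$ consisting of $\mu(f)$ regular points, your argument shows that for $w\in\Omega$ near $y$ the whole fibre $f^{-1}(w)$ sits in a fixed bounded union of neighbourhoods $\bigcup U_i$ of the $x_i$, and you then assert "this forces $f$ to be proper at $y$." That inference is not justified as written: properness at $y$ requires controlling $f^{-1}(w)$ for \emph{all} $w$ near $y$, including $w\in K_0(f)\cup S_f$, over which your covering count says nothing, so a sequence $z_k\to\infty$ with $f(z_k)\to y$ could a priori have all its images inside the bifurcation locus. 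The repair is short but must be said: use the sequence characterization of $S_f$ (there exist $z_k\to\infty$ with $f(z_k)\to y$), note that $f^{-1}(\Omega)$ is open and dense in $Z$, and perturb each $z_k$ to $z_k'$ with $f(z_k')\in\Omega$ still converging to $y$; then $z_k'\in\bigcup U_i$ is bounded, a contradiction. Alternatively, the graph-compactification route you sketch ($\bar f:\overline{\mathrm{graph}(f)}\to\C^n$ proper, $S_f=\bar f(\overline{\mathrm{graph}(f)}\setminus Z)$, and $\#\bar f^{-1}(y)\le\mu(f)$ for finite fibres of a proper generically finite map onto a normal target) closes the gap cleanly; but as submitted the chain of inclusions does not close. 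The second, smaller gap concerns the final clause: you only establish $\dim\overline{K_0(f)}\le n-1$, which leaves open the possibility of a component of $\overline{K_0(f)}$ of codimension $\ge 2$ not contained in $S_f$, in which case $B(f)$ would be nonempty but not a hypersurface. Ruling this out needs purity of the branch locus for the finite morphism $f^{-1}(U)\to U$ over a neighbourhood $U$ avoiding $S_f$ (or the topological argument that $U\setminus A$ is simply connected when $\mathrm{codim}\,A\ge 2$, so the covering extends trivially); this should be stated explicitly.
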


The next results are used in the proofs of Theorems \ref{t:bif-euler} and \ref{t:bif-euler1}: 

\begin{theorem}\label{fiber}
Let $X, Y$ be   complex irreducible algebraic varieties and
$f:X\to Y$ a  polynomial mapping. Let $W_1,..., W_r\subset X$
be closed subvarieties of $X.$ Then, there exists a nonempty
Zariski open  subset $U\subset Y$ such that, for every $y_1, y_2\in
U$, the $r+1$-tuples  $(f^{-1}(y_1), W_i\cap f^{-1}(y_1); i=1,...,r)$ and $(f^{-1}(y_2), W_i\cap f^{-1}(y_2), i=1,...,r) $ are homeomorphic.
\end{theorem}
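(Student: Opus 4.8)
The plan is to obtain the statement as an instance of generic topological triviality for a morphism of complex algebraic varieties, via Whitney stratification theory and the Thom--Mather first isotopy lemma; the only genuine difficulty is that $f$ need not be proper, so the isotopy lemma cannot be applied to $f$ itself. We may assume $f(X)$ is Zariski-dense in $Y$ (otherwise $U:=Y\setminus\overline{f(X)}$ already works, all fibres over $U$ being empty). First I would compactify by graph closure: let $\bar X,\bar Y$ be the closures of $X,Y$ in suitable projective spaces, let $\tilde X\subset\bar X\times\bar Y$ be the Zariski closure of the graph $\{(x,f(x)):x\in X\}$, and let $\bar f:\tilde X\to\bar Y$ be induced by the second projection. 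Then $\bar f$ is projective, hence proper; the assignment $x\mapsto(x,f(x))$ identifies $X$ with a Zariski-open dense subset of $\tilde X$ on which $\bar f$ agrees with $f$; the complement $\partial:=\tilde X\setminus X$ is Zariski-closed; and $Y$ is Zariski-open dense in the irreducible variety $\bar Y$. Put $\tilde W_i:=\overline{W_i}\subset\tilde X$.

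Next I would invoke the classical existence theorems for Whitney stratifications of morphisms (Whitney, \L{}ojasiewicz, Hironaka, Verdier; see also Varchenko's theorem on topological equisingularity of algebraic families): there is a finite Whitney stratification of $\tilde X$ compatible with the closed subsets $\partial,\tilde W_1,\dots,\tilde W_r$ (so each of these is a union of strata), together with a finite stratification $\mathcal{S}$ of $\bar Y$, such that $\bar f$ carries every stratum of $\tilde X$ submersively onto a stratum of $\mathcal{S}$. Let $U$ be the intersection of $Y$ with the open dense stratum of $\mathcal{S}$; this is a nonempty Zariski-open subset of $Y$, and since $Y$ is irreducible, $U$ is connected in the Euclidean topology.

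Over $U$ the map $\bar f$ is a proper stratified submersion, so by the first isotopy lemma it is a locally trivial fibration compatibly with the stratification of $\tilde X$; combined with the connectedness of $U$ this gives, for any $y_1,y_2\in U$, a homeomorphism $h:\bar f^{-1}(y_1)\to\bar f^{-1}(y_2)$ that carries $\partial\cap\bar f^{-1}(y_1)$ onto $\partial\cap\bar f^{-1}(y_2)$ and $\tilde W_i\cap\bar f^{-1}(y_1)$ onto $\tilde W_i\cap\bar f^{-1}(y_2)$ for every $i$. Since $h$ preserves $\partial$, it restricts to a homeomorphism $f^{-1}(y_1)\to f^{-1}(y_2)$; and because $W_i\cap f^{-1}(y_j)=(\tilde W_i\cap\bar f^{-1}(y_j))\setminus\partial$ while $h$ preserves both $\tilde W_i$ and $\partial$, this restriction sends $W_i\cap f^{-1}(y_1)$ onto $W_i\cap f^{-1}(y_2)$ for all $i$. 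This is precisely the asserted homeomorphism of $(r+1)$-tuples.

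The main obstacle is the second step: one needs a single Whitney-regular stratification of the compactified source $\tilde X$ that simultaneously refines the boundary $\partial$, refines every $\tilde W_i$, and turns $\bar f$ into a stratified submersion, so that the isotopy lemma applies and the resulting trivialization is automatically compatible with each locus of interest. The existence of such a stratification is standard in stratification theory, but it is the technical heart of the argument; everything downstream of it is bookkeeping with the trivialization and with removing the boundary.
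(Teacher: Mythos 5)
Your proposal is correct and follows essentially the same route as the paper: compactify the source via the graph closure so that the extended map is proper, choose a Whitney stratification compatible with the boundary and the closures of the $W_i$, discard the closure of the critical values of the restrictions to strata, and apply the Thom first isotopy lemma to get a stratum-preserving trivialization that restricts to the desired homeomorphisms of tuples. The only cosmetic difference is that you also compactify the target $\bar Y$ and select the open dense stratum there, whereas the paper works over $Y$ directly and removes $\overline{\bigcup B_i}$ by hand; these are equivalent.
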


\begin{proof}
We may assume that $Y$ is smooth and irreducible.
Let $X_1$ be an algebraic completion of $X.$  Take
$X_2:=\overline{graph(f)}\subset X_1\times {Y}.$ 

We can assume that $X\subset X_2.$ We have an induced mapping
$\overline{f}: X_2\to  {Y}$ such that
$\overline{f}_{|X}=f.$ Let $Z=X_2\setminus X.$ Denote by
$\overline{W_i},$ for $ i=1,...,r$, the closures of  $W_i$ in $X_2.$
Let $\cal R=\{Z, \overline{W_i}\cap Z,  \overline{W_i},  i=1,...,r) \}$ be a collection of algebraic
subvarieties of $X_2.$ There exists  a Whitney stratification ${\cal
S}$ of $X_2$ which is compatible with $\cal R.$

For any smooth strata  $S_i\in \cal S$, let $B_i$ be the set of
critical values of the mapping {$\overline{f}_{|S_i}$} and put 
$B=\overline{ \bigcup B_i}.$ Take $X_3=X_2\setminus
\overline{f}^{-1}(B).$  The restriction of the stratification
$\cal S$ to $X_3$ provides  a Whitney {stratification} which is
compatible with the family ${\cal R}':={\cal R}\cap X_3.$  We have
a proper mapping $f':=\overline{f}_{|X_3} : X_3\to
{Y}\setminus B$ which is a submersion on each stratum. By
  Thom first isotopy theorem, there is a trivialization of $f'$
which preserves the strata. It is an easy observation that this
trivialization gives a trivialization of the mapping $f:
X\setminus f^{-1}(B)\to Y\setminus B:=U.$ In particular the fibers
$f^{-1}(y_1)$ and $f^{-1}(y_2)$ are homeomorphic via a stratum
preserving homeomorphism. This means that the tuples  $(f^{-1}(y_1), W_i\cap f^{-1}(y_1); i=1,...,r)$ and $(f^{-1}(y_2), W_i\cap f^{-1}(y_2), i=1,...,r) $ are homeomorphic.
\end{proof}

\begin{corollary}\label{c:fiber}
Let $X, Y$ be   complex irreducible algebraic varieties and
$f:X\to Y$ a polynomial  mapping. Let $Z\subset X$
be a constructible subset  of $X.$ Then, there exists a nonempty
Zariski open subset $U\subset Y$ such that, for every $y_1, y_2\in
U$, the pairs  $(f^{-1}(y_1), Z\cap f^{-1}(y_1))$ and $(f^{-1}(y_2), Z\cap f^{-1}(y_2)) $ are homeomorphic.
\end{corollary}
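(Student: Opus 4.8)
The plan is to reduce everything to Theorem \ref{fiber} by expressing the constructible set $Z$ as a Boolean combination of finitely many closed subvarieties, and then to exploit the fact that the single stratum-preserving homeomorphism produced in the proof of Theorem \ref{fiber} respects all those subvarieties simultaneously.

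First I would recall that, since $X$ is a Noetherian space, every constructible subset is a finite Boolean combination of closed subvarieties (equivalently, a finite union of locally closed subsets). Thus I fix closed subvarieties $W_1,\dots,W_r\subset X$ and a Boolean formula $\beta$, built only from the operations of union, intersection and complement, such that $Z=\beta(W_1,\dots,W_r)$.

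Next I would apply Theorem \ref{fiber} to the family $W_1,\dots,W_r$. This produces a nonempty Zariski open subset $U\subset Y$ such that, for all $y_1,y_2\in U$, the tuples $(f^{-1}(y_1),W_i\cap f^{-1}(y_1);\,i=1,\dots,r)$ and $(f^{-1}(y_2),W_i\cap f^{-1}(y_2);\,i=1,\dots,r)$ are homeomorphic. The point I would emphasize — and which is already contained in the proof of Theorem \ref{fiber}, where one chooses a \emph{single} Whitney stratification compatible with all the $\overline{W_i}$ and trivializes it at once — is that there is \emph{one} homeomorphism $h\colon f^{-1}(y_1)\to f^{-1}(y_2)$ with $h(W_i\cap f^{-1}(y_1))=W_i\cap f^{-1}(y_2)$ for every $i$ simultaneously.

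Finally, since $h$ is a bijection it commutes with the formation of unions, intersections and complements inside the fibers; applying $h$ to $\beta\bigl(W_1\cap f^{-1}(y_1),\dots,W_r\cap f^{-1}(y_1)\bigr)=Z\cap f^{-1}(y_1)$ therefore yields $\beta\bigl(W_1\cap f^{-1}(y_2),\dots,W_r\cap f^{-1}(y_2)\bigr)=Z\cap f^{-1}(y_2)$, so $h$ is the desired homeomorphism of pairs. I do not expect any genuine obstacle here: the only points requiring a little care are that Theorem \ref{fiber} really delivers one homeomorphism handling all the $W_i$ at the same time, and that the Boolean expression for $Z$ restricts to each fiber by intersecting each $W_i$ with that fiber — both of which are immediate from the construction.
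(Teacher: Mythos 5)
Your proof is correct and takes essentially the same route as the paper's: the paper writes $Z=\bigcup_{i=1}^{s}P_i\setminus Q_i$ with $P_i,Q_i$ Zariski closed and applies Theorem \ref{fiber} to the family $\{P_i,Q_i;\ i=1,\dots,s\}$, relying on exactly the observation you make that the single tuple-preserving homeomorphism respects the Boolean combination. Your write-up merely makes that last step explicit.
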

 
\begin{proof}
Since $Z$ is constructible we have $Z=\bigcup^s_{i=1} P_i\setminus Q_i$, where $P_i, Q_i, \ i=1,..., s$ are Zariski closed. 
Now we can apply Theorem \ref{fiber}, where the family $\{W_i\}$ coincide with $\{P_i,Q_i; \ i=1,..., s\}$.
\end{proof}

\begin{theorem}\label{construct}
Let $X,Y$ be affine complex varieties and let
$f: X \to Y$ be a polynomial  mapping.
Then,  for every $k\in \Bbb N$, the set $A_k:=\{ y\in Y : \chi(f^{-1}(y))=k\}$ is a constructible set.
\end{theorem}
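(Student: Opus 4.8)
The plan is to prove the statement by Noetherian induction on $\dim Y$, after reducing to the case of irreducible $Y$, using the generic triviality supplied by Corollary \ref{c:fiber} (equivalently, by the stratified-triviality argument in the proof of Theorem \ref{fiber}). Since $Y$ has only finitely many irreducible components and a finite union of constructible sets is constructible, it suffices to show that for each irreducible component $Y'$ of $Y$ the set $\{y\in Y' : \chi(f^{-1}(y))=k\}$ is constructible in $Y'$. So we may assume $Y$ is irreducible and induct on $d:=\dim Y$. If $d=0$, then $Y$ is a single point and the claim is trivial.

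For the inductive step, let $d>0$ and assume the theorem for all affine varieties of dimension $<d$. Apply Corollary \ref{c:fiber} to $f:X\to Y$ with the constructible subset $Z:=X$; (the proof of Corollary \ref{c:fiber} reduces to Theorem \ref{fiber}, whose argument stratifies $\overline{\mathrm{graph}(f)}$ and does not use irreducibility of the source). We obtain a nonempty Zariski-open $U\subseteq Y$ with $f^{-1}(y_1)\cong f^{-1}(y_2)$ for all $y_1,y_2\in U$, so there is a constant $c\in\Z$ with $\chi(f^{-1}(y))=c$ for every $y\in U$. Hence $A_k\cap U$ equals $U$ if $k=c$ and equals $\emptyset$ otherwise; in either case it is constructible. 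Now put $Z_0:=Y\setminus U$, a closed subvariety of $Y$ with $\dim Z_0<d$ because $Y$ is irreducible and $U\neq\emptyset$. The restriction $f':f^{-1}(Z_0)\to Z_0$ is a polynomial mapping of affine varieties ($f^{-1}(Z_0)$ is closed in $X$, hence affine, and $Z_0$ is closed in $Y$, hence affine), and $(f')^{-1}(y)=f^{-1}(y)$ for $y\in Z_0$. By the inductive hypothesis applied to $f'$, the set $A'_k:=\{y\in Z_0:\chi(f^{-1}(y))=k\}$ is constructible in $Z_0$, hence in $Y$. Therefore $A_k=(A_k\cap U)\cup A'_k$ is constructible, completing the induction.

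The only genuine input is the generic constancy of the fiber up to homeomorphism; everything else is bookkeeping in the Noetherian induction. In particular, the Euler characteristic is used solely as a homeomorphism invariant of the fiber — no additivity or excision property is needed — and empty fibers cause no trouble, since $\chi(\emptyset)=0$ merely assigns the corresponding points (those of the open set $Y\setminus\overline{f(X)}$, together with possible empty fibers over $\overline{f(X)}\setminus f(X)$) to $A_0$, and they are swept along by the induction like any others. If one prefers to invoke Corollary \ref{c:fiber} strictly as a black box with irreducible source, one first decomposes $X$ into irreducible components, shrinks $U$ so that each $f|_{X_j}$ and each intersection $f|_{X_{j_1}\cap\cdots\cap X_{j_s}}$ has homeomorphic fibers over $U$, and then applies the inclusion–exclusion formula for the (compactly supported, hence ordinary) Euler characteristic of $f^{-1}(y)=\bigcup_j (f|_{X_j})^{-1}(y)$; this shows directly that reducibility of the source is harmless. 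I expect this last, purely technical, point to be the only place requiring attention; the structure of the proof itself is routine.
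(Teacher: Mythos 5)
Your proof is correct and takes essentially the same route as the paper's: a Noetherian induction on $\dim Y$ using the generic topological triviality of Theorem \ref{fiber} to make $\chi(f^{-1}(y))$ constant on a dense open subset of each top-dimensional component, then passing to the lower-dimensional complement. The only cosmetic difference is that you first reduce to irreducible $Y$ (and you explicitly address the reducibility of the source, which the paper glosses over), whereas the paper removes a dense open set from each top-dimensional component simultaneously.
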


\begin{proof}
We proceed by induction on $d=\dim Y.$ If $d=0$, the result is obvious. Assume $d>0.$ Let $Y_1,..., Y_s$ be all $d-$dimensional components of $Y$ and let $Z_i=Y_i\setminus( \overline{Y\setminus Y_i}).$ By Theorem \ref{fiber}  there exists a nonempty open subset $U_i\subset Z_i$ such that
$\chi(f^{-1}(y))$ is constant for $y\in U_i.$ 
 Take $Y':=Y\setminus \bigcup^s_{i=1} U_i$ and $X':=f^{-1}(Y').$ By the induction hypothesis, the set $A'_k:=\{ y\in Y' : \chi(f^{-1}(y))=k\}$ is constructible. Let $U_{i_1},..., U_{i_r}$ be all subset among $U_1,..., U_s$ such that the fiber of $f$ over $U_i$ has Euler characteristic $k.$
Then $A_k=A'_k\cup  \bigcup^r_{k=1} U_{i_k}$ is a constructible subset of $Y.$
\end{proof}

\subsection{Proof of Theorems \ref{t:bif-euler} and \ref{t:bif-euler1}}\label{ss:proof-1-2}
 
The next technical lemma is motivated by \cite[Theorem 2.6]{VT2}. See Definition \ref{d:general} for the notion of general position. 

\begin{lemma}\label{l:L-proper}  Let $X^n,Y^n\subset \C^{2n-1}$  be strong complete intersections in general position. Then, there exists a linear mapping $L\colon \C^{2n-1}\to \C$ such that \[(\Phi,L): X\times Y  \to (\C^{2n-1}\times \C), (\Phi,L)(x,y)=(\Phi(x,y), L(x)) \] is a proper mapping. 
\end{lemma}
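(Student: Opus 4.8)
The goal is to find a linear form $L\colon\C^{2n-1}\to\C$ making $(\Phi,L)$ proper. The plan is to analyze non-properness "at infinity" on $X\times Y$ and show that a generic linear $L$ separates the directions along which $\Phi$ could lose properness. First I would recall that $\Phi\colon X\times Y\to\C^{2n-1}$ goes from a $2n$-dimensional variety to a $(2n-1)$-dimensional one, so $(\Phi,L)$ maps between varieties of dimensions $2n$ and $2n$; properness can only fail along a sequence $(x_k,y_k)\in X\times Y$ with $\|(x_k,y_k)\|\to\infty$ but $(\Phi(x_k,y_k),L(x_k))$ bounded. Passing to the projective closures $\overline X,\overline Y\subset\P^{2n-1}$ and looking at the points at infinity, such a sequence produces points $[\xi]\in\overline X\cap H_\infty$ and $[\eta]\in\overline Y\cap H_\infty$ with, after rescaling, $\xi+\eta=0$ in the relevant limit (so $\eta=-\xi$) and $L(\xi)=0$. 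The locus of such $\xi$ at infinity is controlled by $\bigl(\bigcap_i\{(f_i)_*=0\}\bigr)\cap\bigl(\bigcap_i\{(g_i)_*=0\}\bigr)$ (using that $[\eta]=-[\xi]$ lies on $\overline Y\cap H_\infty$ iff $\xi$ annihilates all the $(g_i)_*$), which by the general position hypothesis (Definition~\ref{d:general}) has dimension $1$, i.e. it is a finite set of points in $\P^{2n-2}=H_\infty$.

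The key step is then purely linear-algebraic: since the bad directions at infinity form a finite set $\{[\xi_1],\dots,[\xi_N]\}\subset\P^{2n-2}$, a generic linear form $L$ on $\C^{2n-1}$ satisfies $L(\xi_j)\neq 0$ for every $j$; the set of $L$ failing this is a finite union of hyperplanes in the space of linear forms. For such an $L$, I would argue by contradiction: if $(\Phi,L)$ were not proper, there is a curve (one can use the curve selection lemma at infinity, as in \cite{jel, jel1}) $\gamma(t)=(x(t),y(t))$ in $X\times Y$ with $\|\gamma(t)\|\to\infty$ as $t\to 0$ while $\Phi(\gamma(t))$ and $L(x(t))$ stay bounded. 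Expanding $x(t),y(t)$ in Puiseux series in $t$, the leading terms $\xi,\eta$ define points of $\overline X\cap H_\infty$ and $\overline Y\cap H_\infty$; boundedness of $x(t)+y(t)$ forces the leading terms to cancel, so $\eta=-\xi$ and hence $[\xi]$ is one of the bad directions $[\xi_j]$; but then $L(x(t))\sim t^{-m}L(\xi_j)$ with $L(\xi_j)\neq 0$ and $m>0$, contradicting boundedness of $L(x(t))$. This contradiction shows $(\Phi,L)$ is proper.

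The main obstacle I expect is making the "leading terms cancel and land in the finite bad set" step fully rigorous, in particular correctly identifying the scheme-theoretic locus at infinity: one must check that if $\|x(t)\|$ and $\|y(t)\|$ both blow up (possibly at different rates), the dominant one among them already yields a point at infinity of the corresponding variety, and then handle the case $\|x(t)\|$ bounded while $\|y(t)\|\to\infty$ (and vice versa) separately — in that asymmetric case boundedness of $x(t)+y(t)$ is automatic, but then $\eta\in\overline Y\cap H_\infty$ with no constraint from $X$, so one must instead use that $\Phi$ restricted to $X\times Y$ with one factor bounded is already proper onto its image, which again reduces to the general-position dimension count. A secondary technical point is to phrase everything via the projective closures $\overline{X\times Y}$ and a resolution/normalization so that the curve selection lemma applies; this is standard and can be cited from \cite{jel, jel1, JJR}. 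Once the finiteness of bad directions is established from Definition~\ref{d:general}, the genericity of $L$ is immediate.
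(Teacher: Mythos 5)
Your proposal follows essentially the same route as the paper: the general position hypothesis makes $\bigl(\bigcap_i\{(f_i)_*=0\}\bigr)\cap\bigl(\bigcap_i\{(g_i)_*=0\}\bigr)$ a finite union of lines, one picks $L$ not vanishing on those directions, and a curve/leading-term expansion shows any unbounded curve in $X\times Y$ with bounded $\Phi$-image has cancelling leading terms landing in that set, forcing $L(x(t))$ to blow up. The asymmetric case you flag as an obstacle ($\|x(t)\|$ bounded while $\|y(t)\|\to\infty$) is in fact vacuous --- boundedness of $x(t)+y(t)$ then fails outright, so both norms must blow up together and no separate treatment is needed.
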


\begin{proof} By assumption,  $\mathcal{Z}_{X\cap Y}^{\infty}:=\left(\bigcap_{1}^{n-1}\{(f_i)_*=0\}\right) \bigcap \left(\bigcap_{1}^{n-1}\{(g_i)_*=0\}\right)\subset \C^{2n-1}$ is a  union of finitely many complex lines. Let $L\colon \C^{2n-1}\to \C$ be a linear mapping  such that $L(z)\neq 0,\forall z\in (\mathcal{Z}_{X\cap Y}^{\infty} \setminus \{0\}) $. We claim that \[(\Phi,L): X\times Y  \to (\C^{2n-1}\times \C),\]
$(\Phi,L)(x,y)=(\Phi(x,y), L(x)) $ is a proper mapping. 
	
	In fact, let $x(t)=(x_1(t),\ldots,x_{2n-1}(t))$ and  $y(t)=(y_1(t),\ldots,y_{2n-1}(t))$ be analytic curves such that $(x(t),y(t))\in X\times Y$,  $\Phi(x(t),y(t))\to p\in\C^{2n-1}$ and  $\|(x(t),y(t))\|\to\infty,$ as $t\to\infty$. It follows  that  $\|x(t)\| \to \infty $ and $\|y(t)\|\to\infty$, as $t\to\infty$.
	
	We set  \[x_i(t)=a_{\alpha_i}t^{\alpha_i}+\mbox{ lower terms, } y_i(t)=b_{\beta_i}t^{\beta_i}+\mbox{ lower terms},i=1,\ldots, 2n-1,\]
	$\alpha:=\max_{i}\{\alpha_i\}$, $\beta:=\max_{i}\{ \beta_i\}$, 
	$a':=(a_1',\ldots,a_{2n-1}')$ and $b':=(b_1',\ldots,b_{2n-1}'),$ where $a_i'=a_{\alpha_i}$ (resp., $b_i'=b_{\beta_i}$) if $\alpha=\alpha_i$ (resp., $\beta=\beta_i$) and $a'_i=0$ (resp., $b'_i=0$) if $\alpha>\alpha_i$ (resp., $\beta>\beta_i$).  
	
	Since $\Phi(x(t),y(t))\to p\in\C^{2n-1}$, we have $ \alpha= \beta$ and $a'=-b'$.  From the expansions:
	\[ f_j(x(t))=(f_j)_{\nu_j}(a')t^{\alpha \nu_j} + \mbox{ lower terms,}\]
	\[ g_j(y(t))=(g_j)_{\mu_j}(b')t^{\beta \mu_j } + \mbox{ lower terms}, j=1,\ldots, n-1, \]
	and since $f_j(x(t))\equiv  0, g_j(y(t))\equiv 0$ and $ \mathcal{Z}_{X\cap Y}^{\infty}$ is a homogeneous set, we have $a',b'\in \mathcal{Z}_{X\cap Y}^{\infty}$.  
	
	On the other hand, we have the following expansion for $L$: 
	\[ L(x(t))=L(a')t^{\alpha} + \mbox{ lower terms}.\]
	
	Since $a'\in \mathcal{Z}_{X\cap Y}^{\infty},$ we have $L(a')\neq 0$ and therefore  $\|L(x(t))\|\to \infty$. This shows that $(\Phi,L)$ is a proper mapping, as desired. 
\end{proof}

Directly from the  proof of Lemma \ref{l:L-proper}, we have that if $L\colon \C^{2n-1}\to \C$ is a linear mapping such that $L(z)\neq 0$ for any $z\in (\mathcal{Z}_{X\cap Y}^{\infty} \setminus \{0\})$, then the mapping $(\Phi,L): X\times Y  \to (\C^{2n-1}\times \C), (\Phi,L)(x,y)=(\Phi(x,y), L(x))$ is a proper mapping. 

Now, we prove Theorem \ref{t:bif-euler}. 

\begin{proof}[Proof of Theorem \ref{t:bif-euler}]
The proof is motivated by arguments from \cite[Theorem 3.1]{VT1} and \cite[Theorem 2.1]{VT2}.  

 From Lemma \ref{l:L-proper}, there is a linear mapping $L\colon \C^{2n-1}\to \C$ such that \[(\Phi,L): X\times Y  \to (\C^{2n-1}\times \C), (\Phi,L)(x,y)=(\Phi(x,y), L(x)) \] is a proper mapping. 

Let $p\notin  \overline{K_0(\Phi )}$ and consider the smooth algebraic set $\Phi^{-1}(p)$. The restriction $L_{|\Phi^{-1}(p)}$ is a proper mapping, since  $(\Phi,L)$ is a proper mapping and $\Phi^{-1}(p)$ is a closed set. It follows  that  $L_{|\Phi^{-1}(p)} (\Phi^{-1}(p))$ is a closed set and, since it is also a constructible set,  we have  that   $L_{|\Phi^{-1}(p)}(\Phi^{-1}(p))$ is an algebraic set (constructible closed sets are algebraic sets). Thus $L_{|\Phi^{-1}(p)}(\Phi^{-1}(p))=\C$. 

We have  $S_{(\Phi,L)}=\emptyset$, where $S_{(\Phi,L)}$ denotes the non-properness set of $(\Phi,L)$. It follows from Theorem \ref{jk} that $B(\Phi,L)=K_0( \Phi,L) $ and, consequently,  $(\Phi,L)$ is a ramified mapping of degree $d:=\mu(\Phi,L)$, where $\mu(\Phi,L)$ denotes the geometric degree of $(\Phi,L)$. Thus, for any  $p\notin \overline{K_0(\Phi )}$ and any regular value $c\in\C$ of the mapping $L_{|\Phi^{-1}(p)},$ we have  $  \# (L_{|\Phi^{-1}(p)})^{-1} (c)= \# \{\Phi^{-1}(p)\cap L^{-1}(z)\}=\mu(\Phi,L)=d$.  

Take $p_0\notin \overline{K_0(\Phi )}$ and consider a small enough neighborhood $W$ of $p_0$ such that $W \cap \overline{K_0(\Phi)}= \emptyset$. For any $p\in W$, we set
\[\Sigma_p:=\{z_1(p),\ldots,z_{r(p)}(p)\} \subset X \times Y\] 
and 
\[\Delta_p:=\{c_1(p),\ldots,c_{s(p)}(p)\} \subset \C,\] 
the  critical points set and critical values set of $L_{|\Phi^{-1}(p)}$ respectively.

As in the proofs of Theorem 3.1 of \cite{VT1}, Theorem 2.1  of \cite{VT2}, or from \cite[(III,32)]{kl1}, we obtain the following like Riemann-Hurwitz formula: 
\begin{equation}\label{eq:hur}
 \chi(\Phi^{-1}(p))= d + r(p) -\sum_{i=1}^{r(p)}\rho_i(p) = d - \sum_{i=1}^{r(p)} (\rho_i(p) -1) , 
\end{equation} 
where $ \rho_i(p)$ is the multiplicity of the point $x_i(p)$. These equalities imply that there is a small enough neighborhood of $p_0$ such that the following inequality holds  
\[ \chi(\Phi^{-1}(p_0)) \geq \chi(\Phi^{-1}(p)).   \]

Therefore,   there is a sufficiently small neighborhood $V:=B(p_0,\delta)$ such that $V\subset W$ and $\chi(\Phi^{-1}(p_0)) = \chi(\Phi^{-1}(p))$ for any $p \in V$ if and only if there is no point $x_i(p)$ that converges to infinity as $p$ converges to $p_0$.  

By similar arguments of \cite[Theorem 3.1]{VT1}, \cite[Theorem 2.1]{VT2} and \cite[Theorem 3.1]{jel04}, we  will show that $\chi(\Phi^{-1}(p_0)) = \chi(\Phi^{-1}(p)) \forall p \in V$ implies  $p_0\notin  B_{\infty}(X,Y)$.

As we have  said, $\chi(\Phi^{-1}(p_0)) = \chi(\Phi^{-1}(p)) \forall p \in V$ implies that   $ \cup_{p\in V}\Sigma_p $ is bounded and, since $(\Phi,L)$ is a proper mapping, that there exists    $r\in \R$ such that $K:= \Phi^{-1}(V) \cap L^{-1}(\overline{B(0,r)} )$ is a bounded set containing $\cup_{p\in V} \Sigma_p$. Then, we set: 
\[U_1 = \Phi^{-1}(V) \setminus \overline{K},\]
 and we will show that $\Phi$ restricted to $U_1$ is a fibration on $V$. 
 
Consider the (formal) system of differential equations: 
\[
\left(\begin{array}{cc}
 [df(x(t))]_{(n-1) \times (2n-1)}   &   [0]_{(n-1) \times (2n-1)}	 \\ 
\\
 {[0]}_{(n-1) \times (2n-1)} &   [dg(y(t))]_{(n-1) \times (2n-1)}	\\
 \\
 & \hspace{-2cm}{[d\Phi(x(t),y(t))]_{(2n-2) \times (4n-2)}} 	\\
  \\
 &  \hspace{-2.7cm} [d L(x(t),y(t))]_{1 \times (4n-2)}
\end{array}\right) \left(\begin{array}{c}
x'_1(t) \\ 
\vdots\\
x'_{2n-1}(t)\\
\\
y'_1(t) \\ 
\vdots\\
y'_{2n-1}(t)\\

\end{array}\right)  
 = \left(\begin{array}{l}
 [0]_{(n-1)\times 1}\\
 \\
{[0]}_{(n-1)\times 1}\\
\\
{[p-p_0]}_{(2n-1)\times 1}\\
 \\
 {[0]}_{1\times 1}
\end{array}\right).\]
We can solve the system, since the matrix on left size is invertible (at least in $U_1$) and we can write this system shortly as follows: 
\[z'(t) = V_1(p-p_0,z(t)),\]
with $z(t)=(x(t),y(t))$. From above constructions, it follows that: 
\[df(x(t))x'(t)=0, \hspace{.2cm} dg(y(t))y'(t)=0 ,\]
\[ d\Phi(V(p-p_0, z(t)))=p-p_0 , \hspace{.2cm}  dL(V(p-p_0,z(t)))=0. \] 

Let us consider the following differential equation: 
\begin{equation}\label{eq:triv}
z'(t) = V_1(p-p_0, z(t)), \,\, z(0)=\gamma \in \Phi^{-1}(p_0) \setminus \overline{B(0,R)}.
\end{equation}
It follows that if $z_{p-p_0,\gamma}(t)=(x(t),y(t))$ is a solution of \eqref{eq:triv} then $z_{p-p_0,\gamma}(t)\in X\times Y$,    $L(z_{p-p_0,\gamma}(t))$ is constant and  $\Phi(z_{p-p_0,\gamma}(t))=t(p-p_0) + p_0$. Then, from  definition of $U_1$ and since $(\Phi,L)$ is a proper mapping,  the trajectory of $z_{p-p_0,\gamma}$ is bounded and does not cross the boundary of $U_1$ for any $t\in [0,1]$. It follows that the trajectory $z_{p-p_0,\gamma}(t)$ is defined on whole $[0,1]$ and it is contained in $X\times Y$. 
 
 The phase flow $z_{p-p_0,\gamma}(t)$, $t\in[0,1]$,  transforms $\Phi^{-1}(p_0)\cap U_1$ into $\Phi^{-1}(t(p-p_0)+p_0)\cap U_1$ (in fact, by symmetry, it transforms $\Phi^{-1}(p_0)\cap U_1$ onto $\Phi^{-1}(t(p-p_0)+p_0)\cap U_1$). 
 The mapping \[ h: \Phi^{-1}(p_0)\cap U_1 \times V \ni (\gamma,p) \to z_{p-p_0,\gamma}(1) \in \Phi^{-1}(V)\cap U_1,\]
is a diffeomorphism. Thus $p_0 \notin B_{\infty}(X,Y). $   

Let $k$ be the Euler characteristic of a typical fiber of $\Phi.$ Hence a point $p\in \C^{2n-1}\setminus \overline{K_0(\Phi)}$ is typical if and only if $\chi(\Phi^{-1}(p))=k.$ By Theorem \ref{construct}, the set $A_k$ of such points is constructible in $\C^{2n-1}\setminus \overline{K_0(\Phi)}.$ Since $A_k$ is open,  it is Zariski open. Hence, $B(X,Y)=(\C^{2n-1}\setminus A_k)  \cup  \overline{K_0(\Phi)}$ is a Zariski closed set, i.e., $B(X,Y)$ is an algebraic set.
\end{proof}

\begin{definition}\label{d:L-ity} We set 
\[L_{\infty}(\Phi):=\{y\in Y \mid \exists \{x_k\} \subset \Sing(\Phi,L)\setminus \Sing(\Phi) \ \mbox{ s.t. } \|x_k\|\to \infty \mbox{ and } \Phi(x_k)\to y\}.\]
\end{definition}

Let $S:=\overline{\Sing(\Phi,L)\setminus \Sing(\Phi)}$.
We have that  $L_{\infty}(\Phi)$ is the non-properness set of the restriction  $\Phi_{|S}: S \to \C^{2n-1}$ and as a direct consequence of the proof of Theorem \ref{t:bif-euler} (after equation \ref{eq:hur}), it follows that:
\begin{equation}\label{eq:Bxy}
                  B_\infty (X,Y)\subset L_{\infty}(\Phi).
\end{equation}

 Theorem \ref{t:bif-euler1} follows from the next result:  

\begin{theorem}\label{l:deg} Let  $L$ be a generic linear mapping. Then $L_{\infty}(\Phi)$ is either empty or  a hypersurface such that:  
\begin{equation}
\deg L_{\infty}(\Phi) \leq D +d - \mu_{X,Y},  
\end{equation}  
where $D:=\deg \overline{\Sing(\Phi,L)\setminus \Sing(\Phi)}$, $d:=\mu(\Phi,L)$, where $\mu(\Phi,L)$ denotes the geometric degree of $(\Phi,L)$. 
\end{theorem}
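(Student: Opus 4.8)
The plan is to realise $L_{\infty}(\Phi)$ as the non-properness set of a single generically finite polynomial map and then combine the degree estimate for such sets with the Riemann--Hurwitz identity \eqref{eq:hur}. As already observed, $L_{\infty}(\Phi)$ coincides with the non-properness set $S_{\Phi_{|S}}$ of $\Phi_{|S}\colon S\to\C^{2n-1}$, where $S=\overline{\Sing(\Phi,L)\setminus\Sing(\Phi)}$; so I would first check that $S$ has the expected shape. Since $(\Phi,L)\colon X\times Y\to\C^{2n}$ is dominant and generically finite, its ramification locus $\Sing(\Phi,L)$ is either empty or a hypersurface in the smooth $2n$-dimensional variety $X\times Y$, whereas $\Sing(\Phi)$ is the corank-one locus of $\Phi$ and so has codimension at least $2$ there. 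Hence $S$ is $(2n-1)$-dimensional and, for a generic $p\notin K_0(\Phi)$, the fiber $S\cap\Phi^{-1}(p)$ equals the finite critical set $\Sigma_p$ of $L_{|\Phi^{-1}(p)}$ from the proof of Theorem~\ref{t:bif-euler}. In particular $\Phi_{|S}$ is dominant and generically finite; and if $\Sigma_p=\emptyset$ for generic $p$ then $S=\emptyset$, $L_{\infty}(\Phi)=\emptyset$, and there is nothing to prove, so assume otherwise.

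Next I would determine the geometric degree of $\Phi_{|S}$, which over a generic $p$ equals $\#\Sigma_p=r(p)$. For a generic linear $L$ the proper branched covering $L_{|\Phi^{-1}(p)}\colon\Phi^{-1}(p)\to\C$ has degree $d=\mu(\Phi,L)$ and, over a generic $p$, only simple branch points; substituting $\rho_i(p)=2$ into \eqref{eq:hur} yields $\mu_{X,Y}=\chi(\Phi^{-1}(p))=d-r(p)$, hence $\mu(\Phi_{|S})=r(p)=d-\mu_{X,Y}$, a positive integer under the standing assumption.

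Now apply the degree estimate for non-properness sets to $f=\Phi_{|S}\colon S\to\C^{2n-1}$. Every component of $\Phi$ is linear, so each component of $f$ has degree $1$; with $D=\deg S$ and $\mu(f)=\mu(\Phi_{|S})=d-\mu_{X,Y}$, the estimate gives that $L_{\infty}(\Phi)=S_{\Phi_{|S}}$ is either empty or a hypersurface with $\deg L_{\infty}(\Phi)\le\bigl(D\cdot1-(d-\mu_{X,Y})\bigr)/1=D-(d-\mu_{X,Y})$, and, since $d-\mu_{X,Y}=r(p)\ge1$, also $\deg L_{\infty}(\Phi)\le D-1$. For the explicit estimate in Theorem~\ref{t:bif-euler1}, observe that on the strong complete intersection $X\times Y$ the set $\Sing(\Phi,L)$ is the zero locus of a suitable maximal minor of the Jacobian matrix of $(f_1,\ldots,f_{n-1},g_1,\ldots,g_{n-1},\Phi,L)$; the entries arising from $df_i$ and $dg_j$ have degrees $a_i-1$ and $b_j-1$ and the remaining entries are constant, so this minor has degree at most $\sum_{i=1}^{n-1}(a_i+b_i-2)$; combining $\deg(X\times Y)\le\prod_{i=1}^{n-1}a_i\prod_{i=1}^{n-1}b_i$ with B\'ezout gives $D\le\prod_{i=1}^{n-1}a_i\prod_{i=1}^{n-1}b_i\bigl(\sum_{i=1}^{n-1}(a_i+b_i-2)\bigr)$.

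The step I expect to be the main obstacle is the geometric-degree computation of the second paragraph: one must show that a generic linear $L$ produces only Morse critical points of $L_{|\Phi^{-1}(p)}$ over a generic $p$ (so that $\rho_i=2$ in \eqref{eq:hur}), and that $\Sing(\Phi)$ really has codimension at least $2$ in $X\times Y$ so that $S$ has dimension exactly $2n-1$ and the fiberwise count is exactly $d-\mu_{X,Y}$; both facts should follow from a transversality argument that exploits the general position of $X$ and $Y$, but making them precise is the delicate part. A secondary technical point is that $S$ need not be irreducible, so the degree theorem must be applied componentwise (or in a version valid for reducible varieties), and one should verify that any lower-dimensional components of $\overline{\Sing(\Phi,L)\setminus\Sing(\Phi)}$ cannot enlarge $L_{\infty}(\Phi)$ beyond the hypersurface just estimated.
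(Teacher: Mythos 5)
Your proposal follows essentially the paper's own route: you identify $L_{\infty}(\Phi)$ with the non-properness set of $\Phi_{|S}$ for $S=\overline{\Sing(\Phi,L)\setminus\Sing(\Phi)}$, check that $\Phi_{|S}$ is generically finite and dominant with $\mu(\Phi_{|S})=r(p)$, and combine the degree bound for non-properness sets with \eqref{eq:hur}. The one genuine divergence is the last step. The paper uses only the crude consequence of \eqref{eq:hur} that $-r(p)\le d-\chi(\Phi^{-1}(p_0))$ (true simply because $r(p)\ge 0$ and $d-\chi=\sum_i(\rho_i(p)-1)\ge 0$), which yields the stated bound $D+d-\mu_{X,Y}$. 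You instead assert that a generic $L$ produces only simple branch points on a generic fiber, so that $r(p)=d-\mu_{X,Y}$ exactly, and you arrive at $D-(d-\mu_{X,Y})$ --- note the sign: this is a \emph{sharper} inequality than the one in the statement, and it implies it since $d\ge\mu_{X,Y}$. So your argument, if the Morse-genericity claim holds, proves the theorem and a bit more; but the step you flag as the main obstacle is in fact not needed for the stated bound: even with degenerate critical points one has $\mu(\Phi_{|S})=r(p)\ge 1\ge \mu_{X,Y}-d+1$, which already gives $\deg L_{\infty}(\Phi)\le D-1\le D+d-\mu_{X,Y}$, exactly as in the paper. Your second worry, that $\Sing(\Phi)$ must have codimension at least $2$, is also dispensable: $S$ is by construction a union of components of the ramification divisor of the finite map $(\Phi,L)$ that are not contained in $\Sing(\Phi)$, hence is pure of dimension $2n-1$ whenever nonempty, and for $p\notin\overline{K_0(\Phi)}$ the fiber $\Phi^{-1}(p)$ avoids $\Sing(\Phi)$ altogether, so $S\cap\Phi^{-1}(p)=\Sigma_p$ without any codimension hypothesis. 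The caveat about applying the non-properness degree estimate componentwise when $S$ is reducible is legitimate (the paper glosses over it too) but harmless, since the bound is additive over components.
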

\begin{proof} Consider $S:=\overline{\Sing(\Phi,L)\setminus \Sing(\Phi)}$. If $S  = \emptyset$, then $L_{\infty}(\Phi)= \emptyset$. Suppose $ S  \neq \emptyset$. 
We have   $\dim S = 2n-1$.  
 Consider  the restriction of $\Phi$ to $S$, which is an equidimensional mapping: 
\[ \Phi_{|S}: S\to\C^{2n-1}.\] Note that for $p\in \C^{2n-1}\setminus K_0(\Phi)$ the fiber $\Phi^{-1}(p)$ is smooth (hence reduced). Hence, if $Q$ is an
irreducible (one-dimensional) component of this fiber, then either the mapping $L_{|Q}: Q\to \C$ is an isomorphism or it has a finite number of critical points. In the first case, $Q$ is not contained in $S$. In the second case, $Q$ has only a finite number of points in common with $S$ (all critical points of $L_{|Q}$ are in S).
This means that the mapping $\Phi_|S$ is generically finite and so  dominant.

 Let $D:=\deg S$. We can apply Theorem 2.2. Note that the algebraic degree of $ \Phi$ is  $1$. Thus,
 
\[\deg L_{\infty}(\Phi) \leq D - \mu(\Phi_{|S}).\] 
In particular, \[\deg L_{\infty}(\Phi) \leq D - 1\] and, by \cite[Proposition 2.8]{JJR}, we have  $\deg L_{\infty}(\Phi)\leq \prod^{n-1}_{i=1} a_i \prod^{n-1}_{i=1} b_i (\sum^{n-1}_{i=1} a_i+b_i-2)-1.$

By \eqref{eq:hur}, we have   
\[-\mu(\Phi_{|S })= - r(p) \leq -\chi(\Phi^{-1}(p_0)) +d\]
and, consequently, we can obtain the estimation: 
\[\deg L_{\infty}(\Phi) \leq D - \mu(\Phi_{|S}) \leq 
D -\chi(\Phi^{-1}(p_0)) +d,\]
as desired.

\end{proof}

\subsection{Proof of Theorem \ref{ost1}}\label{ss:last}

Denote by $\mathcal{L}(\C^{2n-1}, \C^{2n-1})$ the set of linear mappings from $\C^{2n-1}$ to $\C^{2n-1}$.
Let us consider the mapping $\Psi: X\times Y\times \mathcal{L}(\C^{2n-1},\C^{2n-1})\ni (x,y,H)\mapsto (\frac{x+H(y)}{2}, H)\in \C^{2n-1}\times  \mathcal{L}(\C^{2n-1},\C^{2n-1}).$ The mapping $\Psi$ is a polynomial mapping, hence it has homeomorphic fibers outside the bifurcation set $B(\Psi).$ The Zariski closure $B:=\overline{B(\Psi)}$ is a proper algebraic subset of $\C^{2n-1}\times  \mathcal{L}(\C^{2n-1},\C^{2n-1}).$

 Put
$R=\{ H \in  \mathcal{L}(\C^{2n-1},\C^{2n-1}): \C^{2n-1} \times \{H\}\subset B\}.$ It is easy to see that the set $R$ is algebraic. Since the set $B$ is a proper algebraic subset of $\C^{2n-1}\times  \mathcal{L}(\C^{2n-1},\C^{2n-1})$,  the set $R$ is a proper algebraic subset of $\mathcal{L}(\C^{2n-1},\C^{2n-1}).$ Outside $R$ a generic fiber of the mapping $X\times Y \ni (x,y)\mapsto (\frac{x+H(y)}{2})\in \C^{2n-1}$ is homeomorphic to the generic fiber of $\Psi.$ We can finish the first part of theorem from Proposition \ref{complete}.

Now, we show that    $$(\C^{2n-1}, B(X,H(Y)))\ {\rm and} \ (\C^{2n-1}, B(X,H'(Y)))$$ are homeomorphic for generic $H,H'\in \mathcal{L}(\C^{2n-1},\C^{2n-1}).$ 

Denote $\Psi_H(x,y):=\Psi(x,y,H).$Given a mapping $f$, let $C(f)$ denote  the set of critical points of $f.$ It is easy to compute the $C(\Psi)=\bigcup_{H\in \mathcal{L}(\C^{2n-1},\C^{2n-1})} C(\Psi_H)\times \{H\}.$

In particular, if $\pi: \C^{2n-1}\times \mathcal{L}(\C^{2n-1},\C^{2n-1})\to \mathcal{L}(\C^{2n-1},\C^{2n-1})$ is a projection, then 
$\pi^{-1}(H)\cap K_0(\Psi)=K_0(\Psi_H).$ Since $K_0(\Psi)$ is a constructible set, it follows from Theorem  \ref{fiber} (or rather from their proof) that $\pi^{-1}(H)\cap \overline{K_0(\Psi)}=\overline{K_0(\Psi_H)}.$

Let $k$ be the Euler characteristic of a typical fiber of $\Psi$ and let $B_k=\{ y\in \C^{2n-1}\times \mathcal{L}(\C^{2n-1},\C^{2n-1}): \chi(\Psi^{-1}(y))\not=k\}.$ By Theorem \ref{construct}, the set $B_k$ is constructible. Moreover,   $B(\Psi_H)=\pi^{-1}(H)\cap (B_k\cup \overline{K_0(\Psi)})=B(X,H(Y)).$ Now, the result follows from  Corollary \ref{c:fiber}.

\section*{Acknowledgments} 

The first author was partially supported by the Fapemig-Brazil Grant APQ-02085-21 and CNPq-Brazil grant 301631/2022-0. The second author was partially supported by the grant of Narodowe Centrum Nauki number 2019/33/B/ST1/00755. The authors are grateful to professor M.A.S Ruas for many helpful discussions.

\end{document}